\newtheorem{theorem}{Theorem}[section]
\newtheorem{proposition}[theorem]{Proposition}
\newtheorem{lemma}[theorem]{Lemma}
\newtheorem{corollary}[theorem]{Corollary}
\DeclareMathOperator{\Aut}{Aut}
\DeclareMathOperator{\Sym}{Sym}
\DeclareMathOperator{\supp}{supp}
\DeclareMathOperator{\motion}{motion}
\DeclareMathOperator{\mindeg}{mindeg}
\DeclareMathOperator{\X}{\mathfrak{X}}
\title{On the automorphism group of a distance-regular graph}
\author{L\'aszl\'o Pyber}
\address{HUN-REN Alfr\'ed R\'enyi Institute of Mathematics, Realtanoda utca 13-15, H-1053, Budapest, Hungary}
\email{pyber.laszlo@renyi.hu}
\author{Saveliy V. Skresanov}
\address{Sobolev Institute of Mathematics, 4 Acad. Koptyug avenue, Novosibirsk, Russia}
\email{skresan@math.nsc.ru}
\keywords{distance-regular graph, motion, diameter, automorphism group}
\thanks{This work on the project leading to this application has received funding from the European Research Council (ERC)
under the European Union’s Horizon 2020 research and innovation programme (grant agreement No. 741420).
The first named author was partly supported by the National Research, Development and Innovation Office (NKFIH) Grant No. K138596.
The research of the second named author was carried out within the framework of the Sobolev Institute of Mathematics state contract (project FWNF-2022-0002).}
\begin{document}

\begin{abstract}
	The motion of a graph is the minimal degree of its full automorphism group.
	Babai conjectured that the motion of a primitive distance-regular graph on \( n \) vertices of diameter greater than two
	is at least \( n/C \) for some universal constant \( C > 0 \), unless the graph is a Johnson or Hamming graph.
	We prove that the motion of a distance-regular graph of diameter \( d \geq 3 \) on \( n \) vertices
	is at least \( Cn/(\log n)^6 \) for some universal constant \( C > 0 \), unless it is a Johnson, a Hamming or a crown graph.
	This follows using an improvement of an earlier result by Kivva who gave a lower bound on motion of the form \( n/c_d \), where \( c_d \) depends exponentially on~\( d \).
	As a corollary we derive a quasipolynomial upper bound for the automorphism group of a primitive distance-regular graph
	acting edge-transitively on the graph and on its distance-2 graph.
	The proofs use elementary combinatorial arguments and do not depend on the classification of finite simple groups.
\end{abstract}

\maketitle

\section{Introduction}

The degree of a permutation \( g \) of the set \( V \) is the number of points of \( V \) not fixed by~\( g \).
Given a permutation group \( H \) on \( V \), let the minimal degree \( \mindeg(H) \) be the minimum of degrees
of all nonidentity elements of \( H \). Describing finite primitive permutation groups with small minimal degree is a classical
problem in finite group theory which goes back to the works of Jordan and Bochert. In~\cite{bochert} Bochert proved
that a doubly transitive permutation group of degree \( n \) has minimal degree at least \( n/4 - 1\) with known exceptions.
Using the classification of finite simple groups (CFSG), Liebeck and Saxl~\cite{liebeckSaxl} classified primitive permutation groups
of degree \( n \) with minimal degree less than \( n/3 \), and Guralnick and Magaard~\cite{guralnik} further strengthened this result
to a description of primitive permutation groups with minimal degree at most \( n/2 \).

Given a combinatorial object \( G \) (a graph or a coherent configuration) let the motion of \( G \) be \( \motion(G) = \mindeg(\Aut(G)) \).
As an attempt to generalize the above results to a combinatorial setting, Babai proved the following result on motion
of distance-regular graphs of diameter two, that is, strongly-regular graphs:
\begin{proposition}[{\cite{babaiAut, babaiAut2}}]\label{babaiSRG}
	Let \( G \) be a strongly-regular graph on \( n \geq 29 \) vertices. Then either
	\[ \motion(G) \geq \frac{n}{8}, \]
	or \( G \) or its complement is a Johnson graph, a Hamming graph or a disjoint union of cliques of the same size.
\end{proposition}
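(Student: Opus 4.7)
The plan is to argue by contradiction. Suppose $G$ admits a nontrivial $g \in \Aut(G)$ with $s := |\supp(g)| < n/8$. Since $\Aut(G) = \Aut(\bar{G})$ and the set of exceptional graphs in the statement is closed under complementation, we may replace $G$ by $\bar{G}$ if necessary so that the valency $k$ of $G$ satisfies $2k \leq n-1$. Set $S := \supp(g)$ and $F := V(G)\setminus S$, so $|F| > 7n/8$. The goal is to show that such a large fixed set forces $G$ to be a Johnson graph, a Hamming graph, or a disjoint union of equal cliques.

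The first structural observation is the \emph{indistinguishability of orbits}: for every $v \in F$ and every $\langle g\rangle$-orbit $O$ on $S$, either $O \subseteq N(v)$ or $O \cap N(v) = \emptyset$, because $g$ fixes $v$ and preserves adjacency. Coupled with the strongly-regular parameters $(n,k,\lambda,\mu)$, this is very restrictive. For each $u \in S$ the pair $\{u,g(u)\}$ has $\lambda$ or $\mu$ common neighbours, and since $|S| < n/8$ the bulk of them lie in $F$; by indistinguishability each such fixed common neighbour is adjacent to the entire $\langle g\rangle$-orbit of $u$. Iterating over $u \in S$ manufactures a highly symmetric bipartite incidence between the $\langle g\rangle$-orbits on $S$ and large ``clusters'' of vertices in $F$ sharing identical neighbourhoods on $S$.

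This incidence structure is then fed into the standard spectral and combinatorial constraints for SRGs --- for instance eigenvalue-based clique and coclique bounds \`a la Hoffman and Delsarte, applied to the clusters and the subgraph induced on $F$ --- to force the parameters $(n,k,\lambda,\mu)$ onto the extremal locus of one of these inequalities. A direct parameter-level analysis then identifies this extremal locus as being populated precisely by Johnson graphs, Hamming graphs and disjoint unions of equal-size cliques (together with their complements), which completes the argument.

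The main obstacle is this last classification step: one has to exclude, via quantitative inequalities, every non-exceptional SRG parameter set compatible with $|S| < n/8$, and to rule out a short list of small-$n$ sporadic parameters --- which is exactly the role of the hypothesis $n \geq 29$ in the statement. Because the whole argument uses only common-neighbour counts and elementary eigenvalue estimates, it sidesteps CFSG, in keeping with the paper's philosophy.
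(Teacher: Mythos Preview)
The paper does not prove this proposition at all: it is quoted verbatim from Babai's work \cite{babaiAut, babaiAut2} and used as a black box (for instance in the imprimitive reduction at the end of Section~\ref{pgraphs}). So there is no ``paper's own proof'' to compare against.

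As for your proposal itself, it is a plan rather than a proof, and the gap you flag yourself is the whole theorem. The first two paragraphs are fine and standard: the observation that a fixed vertex sees each $\langle g\rangle$-orbit uniformly is correct, and it does produce a bipartite structure between orbits and ``types'' of fixed vertices. But the sentence ``this incidence structure is then fed into the standard spectral and combinatorial constraints \ldots\ to force the parameters onto the extremal locus'' carries all the weight and none of the argument. You have not said which inequality is pushed to equality, nor why equality in that inequality characterises exactly the Johnson, Hamming, and union-of-cliques families. Babai's actual route is quite different and does not proceed by analysing a single automorphism of small support: he bounds $\motion(G)$ globally via the distinguishing quantity $D_{\min}$ and the spectral inequality $\motion(G)\ge n(k-\xi-\max\{\lambda,\mu\})/k$ (your paper's Proposition~\ref{zerow}), and when these fail to give $n/8$ he lands in a regime of bounded smallest eigenvalue where Neumaier's classification (or a Metsch-type clique-geometry argument) finishes the job. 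Your outline never invokes anything of that strength, and without it the ``direct parameter-level analysis'' is wishful.
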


Furthermore, Babai proposed a conjecture, which gives a natural generalization of the above result to distance-regular graphs of arbitrary diameter:
\medskip

\noindent\textbf{Conjecture.}
(Babai,~\cite[Conjecture~6.6]{kivvaJH}) \emph{There exists \( \gamma > 0 \) such that for any primitive distance-regular graph \( G \)
of diameter greater than two on \( n \) vertices either
\[ \motion(G) \geq \gamma n, \]
or \( G \) is a Johnson or a Hamming graph.}
\medskip

The first step to settling the conjecture was made by Kivva, who proved it for distance-regular graphs of bounded diameter, i.e.\
in his result the constant~\( \gamma \) is allowed to depend on the diameter of the graph.
In fact, he was able to establish his result both for primitive and imprimitive distance-regular graphs.
Recall that a \emph{crown graph} is a graph obtained from a regular complete bipartite graph by removing a perfect matching.

\begin{proposition}[{\cite[Theorem~1.12]{kivvaJH}}]\label{kivvaOld}
	For any \( d \geq 3 \) there exists \( \gamma_d > 0 \) such that for any distance-regular graph \( G \)
	of diameter \( d \) on \( n \) vertices either
	\[ \motion(G) \geq \gamma_d n \]
	or \( G \) is a Johnson graph, a Hamming graph or a crown graph.
\end{proposition}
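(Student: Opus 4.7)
The plan is to show that any automorphism $g \in \Aut(G)$ with small support forces $G$ to have a rigid structure matching one of the exceptional families. Write $F = \operatorname{Fix}(g)$ and $s = n - |F|$; the goal is to prove $s \geq \gamma_d n$ unless $G$ is Johnson, Hamming, or crown.

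The first step is a \emph{closure lemma} driven by the intersection numbers. For any pair $u, v$ at distance 2, the common-neighborhood $N(u) \cap N(v)$ has size exactly $c_2$ by distance-regularity. If $g$ fixes $u$ and fixes (pointwise or setwise) enough of this common neighborhood, then $v^g$ is forced to lie in the very small set of vertices sharing that common neighborhood with $u$. Iterating the observation along distance $i = 2, 3, \ldots, d$ using the numbers $c_i, a_i, b_i$ shows that a single fixed vertex $u$ propagates the fixation to its entire distance-ball of radius $d$, provided $s$ is small compared to suitable functions of the intersection array. Since $G$ is connected of diameter $d$, this forces $g = 1$.

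The second step is an induction on $d$, handling the imprimitive case via Smith's theorem: every imprimitive distance-regular graph is either bipartite or antipodal, and in each case one passes to the halved graph or the antipodal quotient, both of which are distance-regular of smaller diameter. A nontrivial automorphism of $G$ with few moved vertices descends to a nontrivial automorphism of the quotient whose support is controlled in terms of the covering/halving parameters; the inductive hypothesis on the quotient then yields a motion bound that lifts to $G$ at the cost of a multiplicative factor depending on $d$. Crown graphs are precisely the obstruction to this reduction: both their halved graph and their antipodal quotient are complete graphs (diameter~1), which have motion $2$ and carry no nontrivial information back to $G$. They therefore have to appear as a genuine exception, in parallel with Johnson and Hamming graphs in the primitive case.

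For primitive $G$ one combines the closure lemma with Proposition~\ref{babaiSRG} applied to a suitable distance-$i$ relation of $G$, which is a strongly-regular graph (or close to it). Small-support information for $g$ on $G$ transports to small-support information for $g$ on this derived strongly-regular graph, where Babai's dichotomy forces it to be a Johnson or Hamming graph; one then argues that this forces $G$ itself to be Johnson or Hamming, since the parameters of $G$ are determined by those of its distance-$i$ graph under mild genericity hypotheses. The main obstacle is quantitative: at each stage the closure bootstrap loses a multiplicative factor depending on the valency and on $c_2$, which may both be small, so the constants compound to an exponential dependence on $d$ in $\gamma_d$. Controlling this loss uniformly, while carefully excluding all configurations that coincide with Johnson, Hamming, or crown parameters, is the delicate part of the argument and is precisely the exponential dependence that the present paper removes.
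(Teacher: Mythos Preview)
This proposition is cited from Kivva~\cite{kivvaJH} and not re-proved here; the paper instead establishes the stronger Theorem~\ref{main}, whose proof of course subsumes it. Comparing your sketch against that argument (and against Kivva's original), there are genuine gaps.

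Your Step~3 contains a concrete error: for a distance-regular graph of diameter $d \geq 3$, the distance-$i$ relations are \emph{not} strongly regular in general (together they form a coherent configuration of rank $d+1$, not rank~$3$), so Proposition~\ref{babaiSRG} cannot be applied to them, and ``close to it'' is not close enough for Babai's argument to go through. Neither Kivva's proof nor this paper reduces the primitive case to the strongly-regular result. The primitive case is instead handled by a completely different toolkit: the distinguishing-number bound $\motion(\X) \geq D_{\min}(\X)$ (Propositions~\ref{dmin}, \ref{bandc}, \ref{motion}), the zero-weight spectral radius bound (Proposition~\ref{zerow}), and---decisively---the Metsch and Bang--Koolen theorems (Propositions~\ref{clique}, \ref{bang}), which force $G$ to be Delsarte-geometric with bounded smallest eigenvalue once $\lambda$ is large relative to $\mu$. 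Only \emph{after} Delsarte-geometry is established does one invoke Kivva's structural characterizations (Propositions~\ref{johnson}, \ref{hamming}, \ref{dual}) to recognize Johnson or Hamming graphs; these characterizations are themselves substantial results, not consequences of parameter matching.

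Your ``closure lemma'' in Step~1 is also too optimistic as stated: fixing $u$ and part of $N(u)\cap N(v)$ does not pin down $v$, since many vertices at distance~$2$ from $u$ may share exactly the same fixed common neighbors, and nothing in the intersection array rules this out. The actual arguments never propagate fixation along paths; they bound motion globally through $D_{\min}$ or the spectral gap. Your Step~2 (reduction of the imprimitive case via halved and folded graphs, with crown graphs as the degenerate endpoint) is correct in outline and matches what both Kivva and the present paper do.
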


Bounds on \( \motion(G) \) have strong structural consequences for the structure of the automorphism group of \( G \).
The thickness \( \theta(H) \) of a group \( H \) is the largest degree of an alternating section of \( H \).
It follows from a result of Wielandt~\cite{wielandtD} (see Proposition~\ref{thetaIneq}), that if \( \mindeg(H) \geq \alpha n \),
where \( n \) is the degree of \( H \) and \( \alpha > 0 \), then \( \theta(H) \leq \frac{3}{\alpha} \log n \).
In particular, Kivva's result has the following immediate consequence.

\begin{corollary}[{\cite[Theorem~6.3]{kivvaJH}}]
	For any \( d \geq 3 \) there exists \( \alpha_d > 0 \) such that for any distance-regular graph \( G \)
	of diameter \( d \) on \( n \) vertices either
	\[ \theta(G) \leq \alpha_d \log n \]
	or \( G \) is a Johnson graph, a Hamming graph or a crown graph.
\end{corollary}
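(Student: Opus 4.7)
The statement is advertised in the text as an \emph{immediate consequence} of Proposition~\ref{kivvaOld}, so the plan is simply to combine Kivva's motion bound with the Wielandt-type inequality mentioned just above (Proposition~\ref{thetaIneq}), which asserts that for any permutation group \( H \) of degree \( n \), the condition \( \mindeg(H) \geq \alpha n \) forces \( \theta(H) \leq (3/\alpha)\log n \).

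First I would handle the exceptional cases: if \( G \) is a Johnson, Hamming, or crown graph, there is nothing to prove. Otherwise, Proposition~\ref{kivvaOld} supplies a constant \( \gamma_d > 0 \), depending only on \( d \), such that
\[ \mindeg(\Aut(G)) = \motion(G) \geq \gamma_d n. \]
Next I would apply Proposition~\ref{thetaIneq} to the permutation group \( H = \Aut(G) \) acting on the \( n \) vertices of \( G \), with \( \alpha = \gamma_d \), obtaining
\[ \theta(G) = \theta(\Aut(G)) \leq \frac{3}{\gamma_d}\,\log n. \]
Setting \( \alpha_d = 3/\gamma_d \) then yields the desired inequality.

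There is no genuine obstacle here: the corollary is a one-line deduction chaining two results already recorded in the paper. The only item worth checking is that Proposition~\ref{thetaIneq}, attributed to Wielandt~\cite{wielandtD}, is stated in sufficient generality to apply to the full automorphism group of a graph acting on its vertex set, which is the case. Because Kivva's constant \( \gamma_d \) depends exponentially on \( d \), the resulting \( \alpha_d \) inherits the same exponential dependence; any later improvement to \( \gamma_d \) (such as the one announced in the abstract) would automatically yield a corresponding improvement here.
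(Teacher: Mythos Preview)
Your proposal is correct and matches the paper's own reasoning: the corollary is stated there without proof, explicitly as an ``immediate consequence'' of Proposition~\ref{kivvaOld} combined with Wielandt's bound (Proposition~\ref{thetaIneq}). Your write-up is exactly this chaining, so there is nothing to add or correct.
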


One shortcoming of Kivva's result is that, in the notation of Proposition~\ref{kivvaOld}, the constant \( 1/\gamma_d \) grows exponentially in~\( d \),
see~\cite[Remark~4.9]{kivvaGap}. In particular, even though it follows from a result of Pyber~\cite{pyberdrg} (see Proposition~\ref{diam}) that
the diameter \( d \) of a distance-regular graph on \( n \) vertices of valency at least~3 is bounded above by \( 5 \log n \), we cannot obtain a polylogarithmic
bound on the thickness of the automorphism group from Proposition~\ref{kivvaOld}. Our main result is an improvement of Kivva's theorem, namely,
we show that \( 1/\gamma_d \) can be taken to be a polynomial in \( d \).

\begin{theorem}\label{main}
	Let \( G \) be a distance-regular graph on \( n \) points of diameter~\( d \geq 3 \).
	Then either \( G \) is a Johnson, Hamming or crown graph, or
	\[ \motion(G) \geq C \frac{n}{d^6} \]
	for some universal constant \( C > 0 \).
\end{theorem}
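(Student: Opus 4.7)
The plan is to refine the argument underlying Kivva's Proposition~\ref{kivvaOld} so that the dependence on $d$ becomes polynomial rather than exponential. The target bound $Cn/d^6$ suggests that a small number of $d^{O(1)}$-type estimates must be combined in a single pass, in place of an induction on distance layers that would inevitably double the loss at each step.

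First I would assume for contradiction that $g \in \Aut(G) \setminus \{1\}$ has support $S = \supp(g)$ of size $m < cn/d^6$ for a small absolute constant $c > 0$, and that $G$ is not a Johnson, Hamming, or crown graph. Setting $F = V(G) \setminus S$, the fraction $|F|/n$ is close to $1$, so for each distance $i \in \{1,\dots,d\}$ a double-counting argument shows that most of the $nk_i$ ordered pairs of vertices at distance exactly $i$ lie entirely inside $F$. Averaging over $i$ and over $v \in F$, one then selects a vertex $v$ and an index $i$ for which $G_i(v) \cap F$ is a large $g$-fixed set $T$ inside the sphere of radius $i$ around $v$.

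Next I would run a propagation step: if $g$ fixes $v$ and every point of a large $T \subseteq G_i(v)$, then for any other vertex $w$ the count of neighbors of $w$ of prescribed type inside $T$ must match the corresponding intersection number $p^h_{ij}$, so $g$ is forced to fix many further vertices. Performing this propagation once or twice---deliberately not iterating it through all $d$ distance layers, as in Kivva's approach---either contradicts the assumption $m < cn/d^6$ or forces the intersection array into a very restricted form. The surviving arrays are then matched with those of the Johnson, Hamming, and crown families using classical characterizations of these graphs by their intersection parameters.

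The hardest part will be choosing the right quantitative form of the propagation step so that the exponent $6$ falls out naturally, while still leaving the genuine exceptions intact. Crown graphs admit automorphisms with support that is linear in $d$ (swapping antipodal vertex pairs), and the Johnson and Hamming families sit very close to the motion threshold, so the main inequality must be tight enough to admit these families as exceptions but strict enough in every other case. Imprimitive distance-regular graphs will require a separate treatment, since some $G_i$ is disconnected there and the averaging step must be adapted; crown graphs should enter precisely here, as the bipartite antipodal exception.
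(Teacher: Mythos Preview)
Your proposal is not a proof; it is a high-level wish list, and the central mechanism you sketch---a ``one or two step propagation'' of fixed points through intersection numbers $p^h_{ij}$---is neither made precise nor shown to yield anything. You do not explain which inequality the propagation produces, why two steps rather than $d$ steps suffice, or how a ``restricted intersection array'' is then recognized as Johnson, Hamming, or crown. Kivva's exponential loss does not arise from naive layer-by-layer iteration that you could simply truncate; it comes from his spectral-gap argument (the ``growth-induced tradeoff''), and removing that requires a genuinely new structural input, not just stopping an induction early.

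The paper's proof follows an entirely different route, and its key step is one your plan does not even approach. The heart of the argument is Theorem~\ref{geom}: if $\motion(G) < n/(40d^5)$ then $G$ is Delsarte-geometric with smallest eigenvalue $m > -5d$. This is obtained by (i) an edge-expansion bound for coherent configurations (Proposition~\ref{expand}) applied to $S=\supp(g)$, which forces $\lambda \ge k/(2d)$; (ii) Metsch's theorem (Proposition~\ref{clique}) to produce a clique of order $\ge \lambda/2$; (iii) the Delsarte bound to get $-m < 5d$; and (iv) the Bang--Koolen criterion (Proposition~\ref{bang}) to conclude that $G$ is Delsarte-geometric. Once $G$ is Delsarte-geometric with $-m$ bounded polynomially in $d$, the paper invokes Kivva's deep characterization theorems for Johnson and Hamming graphs (Propositions~\ref{johnson} and~\ref{hamming}), together with a spectral motion bound (Proposition~\ref{zerow}) and the eigenvalue estimate of Proposition~\ref{eigenexp}, in a case analysis on $\mu$ and on the second eigenvalue $\theta$. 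None of these ingredients---edge expansion, Metsch, Bang--Koolen, Delsarte geometry, or the Johnson/Hamming characterizations---appear in your plan, and without them there is no known way to isolate the Johnson and Hamming families at the required level of precision.
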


As mentioned earlier, \( d \leq 5 \log n \) if \( G \) has valency at least~3, hence in this case the motion
is bounded below by \( C n /(5 \log n)^6 \).
Now using Wielandt's bound for thickness, we immediately obtain the following polylogarithmic estimate.

\begin{corollary}
	Let \( G \) be a distance-regular graph on \( n \) points of diameter~\( d \geq 3 \).
	Then either \( G \) is a Johnson, Hamming or crown graph, or
	\[ \theta(G) \leq C (\log n)^7 \]
	for some universal constant \( C > 0 \).
\end{corollary}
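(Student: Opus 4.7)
The plan is to chain the three ingredients already set out in the introduction: the motion bound of Theorem~\ref{main}, Pyber's diameter bound (Proposition~\ref{diam}), and Wielandt's thickness inequality (Proposition~\ref{thetaIneq}). The paragraph preceding the corollary in fact spells out this chain almost in full, so the task reduces to writing it down carefully and handling one small edge case.

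First I would dispose of graphs of low valency. A connected distance-regular graph of diameter $d \geq 3$ has valency $k \geq 2$, and if $k = 2$ then $G$ is a cycle $C_n$ (with $n \geq 6$) and $\Aut(G)$ is the dihedral group of order~$2n$. Dihedral groups are metacyclic, so all their composition factors are cyclic, and hence $\theta(\Aut(G))$ is bounded by a small absolute constant, trivially satisfying the claimed inequality. We may therefore assume $k \geq 3$, which is precisely the hypothesis under which Pyber's bound $d \leq 5\log n$ is valid.

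Next, assuming that $G$ is not a Johnson, Hamming or crown graph, Theorem~\ref{main} combined with the diameter estimate gives
\[ \motion(G) \geq \frac{Cn}{d^6} \geq \frac{Cn}{(5\log n)^6} = \frac{C'n}{(\log n)^6} \]
for an absolute constant $C' > 0$. Setting $\alpha = C'/(\log n)^6$ and applying Proposition~\ref{thetaIneq} to $\Aut(G)$ then yields
\[ \theta(G) = \theta(\Aut(G)) \leq \frac{3}{\alpha}\log n = \frac{3(\log n)^7}{C'}, \]
which is the required bound, with the absolute constant $3/C'$.

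Since every ingredient is already in place and merely needs to be combined, there is no genuine obstacle here. The only point requiring explicit comment is that Pyber's diameter bound is not available for valency at most two, which is exactly why the cycle case must be dispatched separately at the outset; everything else is a mechanical substitution.
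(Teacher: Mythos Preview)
Your proof is correct and follows exactly the approach the paper indicates: combine Theorem~\ref{main} with the diameter bound $d \leq 5\log n$ of Proposition~\ref{diam} to get $\motion(G) \geq C'n/(\log n)^6$, then apply Wielandt's inequality (Proposition~\ref{thetaIneq}). Your explicit treatment of the valency-$2$ case is a small but welcome addition that the paper leaves implicit.
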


It follows from CFSG and a result of Babai, Cameron and P\'alfy~\cite{bcp} that the order of a primitive permutation
group \( H \) is bounded above in terms of its degree and thickness by \( n^{\theta(H)^c} \) for some universal constant \( c > 0 \).
In particular, this implies a quasipolynomial bound for a primitive group of automorphisms of a distance-regular graph, which is not a Johnson, Hamming or a crown graph.
Our second main result is an improvement to this observation, which does not require CFSG.
For a connected graph \( G \) on \( V \) let \( G_2 \) denote the distance~2 graph, i.e.\
\[ G_2 = \{ \{x, y \} \subseteq V \mid d(x, y) = 2 \}. \]
It turns out that instead of assuming that the group of automorphisms is primitive, it suffices to assume that the group acts edge-transitively on \( G \) and \( G_2 \).

\begin{theorem}\label{maingroup}
	Let \( G \) be a primitive distance-regular graph on \( n \) points of diameter greater than two,
	which is not a Johnson or a Hamming graph.
	Assume that its full automorphism group \( H \) acts edge-transitively on \( G \) and \( G_2 \).
	Then \( H \) has a base of size at most \( C(\log n)^9 \) for some universal constant \( C > 0 \),
	in particular, \( |H| \leq n^{C(\log n)^9} \).
\end{theorem}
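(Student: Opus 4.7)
The strategy is to combine Theorem~\ref{main} with Wielandt's inequality and then convert the resulting thickness bound into a base-size bound combinatorially, using the edge-transitivity hypothesis in place of CFSG. For the first step I would apply Theorem~\ref{main} to $G$: since $G$ is primitive it is not bipartite and hence not a crown graph, it is not Johnson or Hamming by hypothesis, and $d \ge 3$. Thus $\motion(G) \ge Cn/d^6$. A primitive distance-regular graph of diameter at least three has valency at least three, so Proposition~\ref{diam} gives $d \le 5\log n$, and hence $\motion(H) = \motion(G) \ge C_1 n/(\log n)^6$. Wielandt's bound (Proposition~\ref{thetaIneq}) then yields $\theta(H) \le C_2 (\log n)^7$.

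To produce a base of size $O((\log n)^9)$ for $H$, I plan to construct one greedily: pick $v_0$ arbitrarily and, for each $i \ge 1$, choose $v_i$ so that its orbit under the current pointwise stabilizer $H^{(i)} = H_{v_0, \ldots, v_{i-1}}$ is as large as possible, until $H^{(i)} = 1$. The edge-transitivity hypothesis enters at the initial stages: $H_{v_0}$ is transitive on $\Gamma(v_0)$ and on the set of vertices at distance $2$ from $v_0$, orbits of sizes $k_1$ and $k_2$ respectively, and since $\sum_{i=0}^{d} k_i = n$ with $d \le 5\log n$, either $k_1$ or $k_2$ is $\Omega(n/\log n)$, or else most vertices lie at distance $\ge 3$ from $v_0$, in which case one bootstraps by iterating along a geodesic of length at most $d$. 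For later stages edge-transitivity no longer applies to $H^{(i)}$ directly, so I would exploit the thickness bound: polylogarithmic thickness forces every alternating section of $H^{(i)}$ to have degree at most $C_2(\log n)^7$, which, combined with the rigidity imposed by the intersection numbers of the DRG, rules out the scenario that every orbit of $H^{(i)}$ on $V$ has size at most $n^{1-1/(\log n)^c}$ for a suitable constant $c$.

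The decisive step is this last combinatorial reduction: transferring the polylogarithmic thickness estimate into a polylogarithmic base-size bound without invoking CFSG. The edge-transitivity of $H$ on \emph{both} $G$ and $G_2$ is essential, since it ensures that the two most important orbitals of $H$ on $V \times V$ coincide with the distance-one and distance-two relations of $G$; this rigidity of the orbital scheme is what permits a direct combinatorial analysis of suborbit sizes without appealing to CFSG-based order bounds such as Babai--Cameron--P\'alfy. The extra $(\log n)^2$ factor beyond $\theta(H)$ in the final exponent would arise from this iterative orbit-size analysis, with one logarithmic factor coming from the need to reach the trivial pointwise stabilizer and another from the diameter bound $d \le 5\log n$.
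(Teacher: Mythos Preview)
Your first two steps agree with the paper: Theorem~\ref{main} together with the diameter bound gives $\motion(G)\ge C_1 n/(\log n)^6$, and Wielandt's inequality then gives $\theta(H)\le C_2(\log n)^7$. The divergence, and the gap, is in your ``decisive step'': you assert that bounded thickness, together with ``rigidity imposed by the intersection numbers'', rules out the possibility that all orbits of $H^{(i)}$ on $V$ are small. This inference is not justified and is in general false. A permutation group can have bounded thickness (for instance any $p$-group, or any solvable group) while every point stabilizer has only orbits of constant size; nothing about the intersection array of $G$ prevents this once $H^{(i)}$ is no longer transitive or edge-transitive. So the greedy construction as outlined does not terminate in polylogarithmically many steps without a further idea, and your sketch does not supply one. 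Your initial-stage argument is also incomplete: there is no reason for $k_1$ or $k_2$ to be $\Omega(n/\log n)$ (think of a Hamming-like graph where $k_i$ grows with $i$), and ``bootstrapping along a geodesic'' is not worked out.

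The paper's route is genuinely different and avoids the greedy orbit analysis entirely. From the thickness bound it invokes Proposition~\ref{halvinglem} to obtain a \emph{halving set} $\Sigma$ of size $O((\log n)^7)$: a set whose pointwise stabilizer has no orbit larger than $n/2$. The edge-expansion bound (Proposition~\ref{expand}) then shows that $\Sigma$ splits at least a $1/(2d)$ fraction of the edges of $G$ and of $G_2$. Here is where edge-transitivity is actually used: the Splitting Lemma (Proposition~\ref{splitlem}), applied separately to $G$ and to $G_2$, upgrades $\Sigma$ to sets $\Delta$, $\Delta_2$ of size $O(|\Sigma|\, d\log n)=O((\log n)^9)$ that split \emph{every} edge of $G$ and of $G_2$ respectively. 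Finally, Lemma~\ref{pybTrick} shows that any set splitting all edges of both $G$ and $G_2$ is a base. The two extra logarithmic factors you predicted indeed come from $d$ and from the Splitting Lemma, but the mechanism is halving set $\to$ edge splitting $\to$ base, not an iterated orbit-size argument.
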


It is desirable to prove a similar bound for arbitrary groups of automorphisms of distance-regular graphs,
i.e.\ without necessarily assuming edge-transitivity on \( G \) or \( G_2 \). Note that Babai~\cite{babaiAut2} conjectures
that such a result holds for strongly-regular graphs. This conjecture ultimately aims at obtaining an elementary, that is, CFSG-free, proof
of Cameron's theorem~\cite[Theorem~6.1]{cameron} on the orders of primitive permutation groups, and our Theorem~\ref{maingroup} is a step in this direction.

The main novelty of our proof lies in the following improvement of~\cite[Theorem~1.7]{kivvaGap}.

\begin{theorem}\label{geom}
	Let \( G \) be a primitive distance-regular graph on \( n \) vertices with diameter \( d \geq 3 \).
	Then either \( \motion(G) \geq n/(40d^5) \) or \( G \) is Delsarte-geometric with the smallest eigenvalue at least \( -5d \).
\end{theorem}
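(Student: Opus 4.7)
The plan is to argue by contradiction along the lines of Kivva's Theorem~1.7 in \cite{kivvaGap}, but to replace the exponential-in-$d$ estimates at each step by polynomial ones. Suppose that $\motion(G) < n/(40d^5)$, so there is a nonidentity $g \in \Aut(G)$ with support $S = \supp(g)$ of size less than $n/(40d^5)$. Write $F = V(G) \setminus S$ for the fixed-point set. The guiding idea is that a $g$ with tiny support preserves so much local structure that a rich system of cliques must exist, and then Metsch's clique-geometry criterion will force $G$ to be Delsarte-geometric.

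The first step is to propagate the ``almost-all-fixed'' condition from $V(G)$ to local balls. For every $x \in F$ the distance spheres $G_i(x)$ are $g$-invariant, and using the intersection numbers $c_i, b_i$ together with Proposition~\ref{diam} (which gives $d \leq 5 \log n$ when the valency is $\geq 3$) one double counts incidences $(x,y)$ with $x \in F$ and $y \in G_i(x) \cap S$. This yields, for all but a very small fraction of pairs $(x,i)$, that $|G_i(x) \cap S|$ is negligible compared to $|G_i(x)|$. The polynomial-in-$d$ exponent is chosen precisely so that, after summing over the $d$ distance classes, the exceptional set still has size $o(n)$; this is the key place where one must resist any argument that loses a multiplicative factor per distance level.

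The second and main step is to produce many large cliques. For a typical edge $\{x,y\}$ of $G$ lying in $F$, one shows that most vertices of $G(x) \cap G(y)$ are also fixed by $g$, so that $g$ acts trivially on most of this $\mu$-set, where $\mu = c_2$. Using this together with the primitivity hypothesis (which, via standard DRG combinatorics, bounds the ratios among intersection numbers in a controlled way), one argues that every edge in a large subgraph lies in a clique whose size meets the threshold of Metsch's criterion. Applied in the sharpened form used by Kivva, Metsch's theorem then shows that $G$ carries a Delsarte clique geometry with line size $\ell$ bounded by $O(d)$; the Delsarte inequality then gives smallest eigenvalue at least $-\ell$, and a careful choice of constants yields the stated bound $-5d$.

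The main obstacle is the second step: one must produce sufficiently many $g$-invariant cliques of the right size with the quantitative dependence being polynomial in $d$. Kivva's original argument iterates a local estimate across the $d$ distance classes and consequently incurs an exponential blow-up. The improvement needed is a single global averaging argument over $F$ and the distance classes simultaneously, bounding the ``bad'' contribution in one shot rather than compounding it. Once this averaging is in place, tracking constants carefully through the Metsch criterion and the Delsarte bound produces the stated estimates $n/(40d^5)$ and $-5d$.
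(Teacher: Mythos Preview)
Your sketch does not reach the actual mechanism, and several of its steps point in the wrong direction.

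First, focusing on fixed points $x\in F$ and ``propagating almost-all-fixed to local balls'' does not produce the needed inequality. The useful information comes from a vertex $x\in\supp(g)$, not from $F$: for such $x$ the pair $x,x^g$ are distinct vertices sharing all the fixed neighbours of $x$, and that is what forces a lower bound on either $\lambda$ or $\mu$. The paper exploits this via the edge-expansion bound (Proposition~\ref{expand}) applied to $S=\supp(g)$: some $x\in S$ has at least $k/(2d)$ neighbours in $V\setminus S$, hence $x$ and $x^g$ have $\geq k/(2d)$ common neighbours. Crucially, this uses only $|\supp(g)|\leq n/2$, not the much stronger hypothesis $|\supp(g)|<n/(40d^5)$ you assume; your approach discards this leverage.

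Second, the case you never isolate is the one that actually delivers the motion bound. The paper first splits on whether $c_2>k/(20d^4)$. If so, the valency ratios $k_i/k_{i-1}$ are all below $20d^4$, and Proposition~\ref{dmin} on $D_{\min}$ gives $\motion(G)\geq n/(40d^5)$ directly --- no automorphism $g$ is ever examined. Only when $c_2\leq k/(20d^4)$ does one pick a $g$; then the common-neighbour argument above forces $x,x^g$ to be adjacent (else $c_2\geq k/(2d)$, a contradiction), whence $\lambda\geq k/(2d)$.

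Third, your use of Metsch and the eigenvalue bound is garbled. Metsch (Proposition~\ref{clique}) is invoked only once $\lambda^2\geq 4k\mu$ is known from the parameter inequalities above; it outputs a single clique of size $\geq\lambda/2$, not a clique geometry. The Delsarte bound then gives $-m\leq k/(\lambda/2-1)<5d$. Your sentence ``line size $\ell$ bounded by $O(d)$'' is backwards: in a Delsarte geometry the line size is $1-k/m$, which is large (of order $k/d$); it is the number of lines through a point, namely $-m$, that is bounded by $5d$. Finally, Delsarte-geometricity itself is not a consequence of Metsch but of the Bang--Koolen criterion (Proposition~\ref{bang}): one checks $m^2\mu\leq 25d^2\cdot k/(20d^4)<k/(2d)\leq\lambda$, and that is what yields the clique geometry.

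In short, the proof hinges on (i) the $c_2$ case split, (ii) edge expansion on $\supp(g)$ to force $\lambda\geq k/(2d)$, (iii) Metsch for one large clique, (iv) Delsarte for $-m<5d$, and (v) Bang--Koolen for Delsarte-geometricity. None of these five ingredients appears in your outline.
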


The original result of Kivva~\cite{kivvaGap} relies on a series of elaborate inequalities, involving Ostrowski's theorem on matrix eigenvalues
and the ``growth-induced tradeoff''~\cite[Theorem~1.2]{kivvaGap}. Our proof offers both a significant simplification of the argument
and a better bound on the motion. The main tools for us are the Bang-Koolen theorem~\cite{bang}, Metsch theorem~\cite{metsch}
and a generalization of Babai-Szegedy's~\cite{szegedy} edge expansion bound to coherent configurations.
Namely, in Proposition~\ref{expand} we prove that edge expansion of a connected symmetric relation of a homogeneous coherent configuration
is bounded below by \( k/(2d) \), where \( k \) is the degree and \( d \) is the diameter of the relation. As an application of that bound,
in Proposition~\ref{eigenexp} we show that the second largest eigenvalue \( \theta \) of a distance-regular graph of degree \( k \)
and diameter \( d \) satisfies \( \theta \leq k(1 - 1/(8d^2)) \), which may be of independent interest.
We believe that extending other results on vertex- and edge-transitive graphs to coherent configurations may be an interesting area of research.

In the proof of Theorem~\ref{main} we use Theorem~\ref{geom} together with deep results of Kivva on characterizations of Johnson and Hamming graphs~\cite{kivvaJH}.

The proof of Theorem~\ref{maingroup} relies on a small combinatorial trick (Lemma~\ref{pybTrick}) and, surprisingly, on the argument
of the first author showing that the orders of doubly transitive permutation groups not containing the alternating group have quasipolynomial size~\cite{pyber2tr}.

We mention that, similarly to distance-regular graphs, Babai conjectured that the motion of a primitive coherent configuration
of degree \( n \) should be at least \( \gamma n \) for some universal constant \( \gamma > 0 \), unless the configuration is a so-called Cameron scheme,
see~\cite[Conjecture~6.5]{kivvaJH} and~\cite[Conjecture~12.1]{babaiAlgo}.
If true, this would give a combinatorial analogue of Liebeck and Saxl's description of primitive permutation groups of small minimal degree. The case of rank~3 coherent configurations
was settled by Babai in~\cite{babaiAut, babaiAut2} and the case of rank~4 by Kivva in~\cite{kivvaRank4}. Yet, recently this conjecture was refuted
by Eberhard~\cite{eberhard} who proposed a new construction of primitive coherent configurations with small motion.
The motion conjecture for distance-regular graphs therefore stands as an important open case.

The structure of the paper is as follows. In Section~\ref{prel} we provide preliminary results and notation for distance-regular
graphs (Subsection~\ref{pDrg}), coherent configurations (Subsection~\ref{pCC}), and recall Kivva's characterizations of Johnson and Hamming graphs (Subsection~\ref{pJH}).
In Section~\ref{pgraphs} we prove Theorems~\ref{main} and~\ref{geom}, and in Section~\ref{pgroups} we establish Theorem~\ref{maingroup}.

\section{Preliminaries}\label{prel}

\subsection{Distance-regular graphs}\label{pDrg}

Let \( G \) be a connected graph on \( n \) points, and let \( V \) denote the set of vertices.
For two vertices \( x, y \in V \) let \( d(x, y) \) denote the distance between \( x \) and \( y \),
and let \( \Gamma_i(x) \) be the set of \( y \) with \( d(x, y) = i \).

For vertices \( x \) and \( y \) at distance \( j \) write \( c_j(x, y) = | \Gamma_{j-1}(x) \cap \Gamma_1(y) | \),
\( a_j(x, y) = | \Gamma_j(x) \cap \Gamma_1(y) | \) and \( b_j(x, y) = | \Gamma_{j+1}(x) \cap \Gamma_1(y) | \).
The graph \( G \) is called \emph{distance-regular}, if these numbers depend only on the distance \( j \) between \( x \) and \( y \).
In this case we will simply write \( c_j \), \( a_j \) and \( b_j \). If \( k \) is the valency of \( G \), then for all \( j \) we have \( a_j = k - b_j - c_j \).

If \( d \) is the diameter of \( G \), then we have the following inequalities
\begin{align*}
	k &= b_0 > b_1 \geq b_2 \geq \dots \geq b_{d-1} \geq 1,\\
	1 &= c_1 \leq c_2 \leq \dots \leq c_d \leq k.
\end{align*}
As distance-regular graphs of valency \( 2 \) are cycles, we will assume that \( k > 2 \).

The numbers \( k_i = |\Gamma_i(x)| \), \( i = 1, \dots, d \), do not depend on the vertex \( x \in V \) and satisfy
\[ \frac{k}{c_2} = \frac{b_0}{c_2} > \frac{b_1}{c_2} = \frac{k_2}{k_1} \geq \dots \geq \frac{k_{d}}{k_{d-1}}. \]
We may also always assume that \( k_1 \leq k_2 \)~\cite[Proposition~5.1.1]{brouwer}.

In the following \( \log n \) is a logarithm in base~2.
\begin{proposition}[{\cite{pyberdrg}}]\label{diam}
	The diameter of a distance-regular graph on \( n \) vertices and valency greater than two is at most \( 5 \log n \).
\end{proposition}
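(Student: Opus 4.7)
The plan is to prove $d \leq 5 \log n$ by showing that the sphere sizes $k_i = |\Gamma_i(x)|$ grow sufficiently fast in $i$. Setting $r_i := k_{i+1}/k_i = b_i/c_{i+1}$, the displayed inequalities in the excerpt give $r_0 > r_1 \geq r_2 \geq \cdots \geq r_{d-1}$. My approach splits the diameter into a ``fast'' prefix on which $r_i \geq 2$ and a ``slow'' tail on which $r_i < 2$.

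For the prefix: if $r_i \geq 2$ holds throughout $i < d$, then $k_d \geq k \cdot 2^{d-1} \geq 3 \cdot 2^{d-1}$, giving $d \leq \log n$, well within the claimed bound. In general, let $t$ be the first index with $r_t < 2$; by monotonicity $r_i < 2$ for all $i \geq t$, and the prefix contributes $k_t \geq 3 \cdot 2^{t-1}$, hence $t \leq \log n + O(1)$. It then remains to bound the tail length $d - t$.

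For the tail I would exploit the structural constraints $1 \leq b_i$, $c_i \leq k$, and $b_i + c_i \leq k$, together with the monotonicity of $(b_i)$ and $(c_i)$. Both sequences are monotone and integer-valued with values in $\{1, \ldots, k\}$, so the pair $(b_i, c_{i+1})$ takes only $O(k)$ distinct values along the tail, partitioning it into that many ``plateaus'' of constant ratio $r_i$. Within each plateau $k_i$ evolves at a fixed geometric rate, so using $\max_i k_i \leq n$ one should conclude $d - t = O(\log n)$. The main obstacle is the regime where $r_i$ is close to $1$ for many consecutive $i$: there the plateau argument is too weak, and extracting the clean constant $5$ is not obvious. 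This is where the hypothesis $k \geq 3$ (which separates us from cycles with linear-in-$n$ diameter) must enter quantitatively, likely via the strict drop $r_0 > r_1$ combined with the integrality of $b_i$ and $c_{i+1}$, which prevents $r_i$ from sitting arbitrarily close to $1$ without forcing the denominators $c_{i+1}$ to grow.
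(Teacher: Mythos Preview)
The paper does not prove this proposition: it is stated with a citation to \cite{pyberdrg} and followed only by a remark about the sharper bound in \cite{bangDiam}. There is no in-paper argument to compare against.

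On its own terms your sketch is incomplete, and you have correctly located the gap. The plateau argument collapses on any stretch where $r_i = b_i/c_{i+1} = 1$: there $k_i$ is constant, so neither $\max_i k_i \leq n$ nor $\sum_i k_i = n$ bounds the length of that stretch by anything like $O(\log n)$. Integrality and monotonicity of $(b_i)$ and $(c_i)$ alone do not forbid $b_i = c_{i+1}$ for many consecutive~$i$, and bounding the \emph{number} of plateaus by $O(k)$ is useless since $k$ need not be polylogarithmic in~$n$. The hypothesis $k \geq 3$ has to enter through something stronger than the single strict drop $r_0 > r_1$; Pyber's actual proof in \cite{pyberdrg} supplies an extra combinatorial relation linking sphere sizes at distant indices of the intersection array, and without an ingredient of that kind the slow-tail regime cannot be closed.
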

In~\cite[Theorem~1.5]{bangDiam} this bound was improved to \( \frac{8}{3} \log n \), but in our proof the multiplicative constant
will not play a significant role.

The adjacency matrix of a distance-regular graph has precisely \( d+1 \) distinct eigenvalues.
All of them are real, the largest being \( k \). Let \( \theta \) denote the second largest eigenvalue,
and let \( m \) be the smallest eigenvalue. We have \( m < 0 \).

\begin{lemma}[Delsarte bound {\cite{delsarte}, \cite[p.~276]{godsil}}]
	In a distance-regular graph the size of every clique is bounded above by \( 1 - k/m \).
\end{lemma}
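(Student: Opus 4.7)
My plan is to prove the bound via the primitive idempotent projecting onto the eigenspace of the smallest eigenvalue \( m \). Let \( A_0 = I, A_1 = A, A_2, \ldots, A_d \) be the distance matrices of \( G \); they span the Bose--Mesner algebra, which also admits a basis of primitive idempotents \( E_0, \ldots, E_d \), each being the orthogonal projection onto an eigenspace of \( A \). Let \( E \) denote the idempotent attached to \( m \); the key fact I will exploit is that \( E \) is positive semidefinite.

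First I would identify the two coefficients needed in the expansion
\[ E = \frac{1}{n}\sum_{i=0}^{d} q_i A_i. \]
Taking traces yields \( q_0 = f \), the multiplicity of \( m \). To find \( q_1 \), I compare the \( A_0 \)-coefficient on both sides of \( AE = mE \): among the products \( A\cdot A_i \), only \( A\cdot A_1 = kA_0 + a_1 A_1 + c_2 A_2 \) contributes to \( A_0 \), so \( q_1 k = m q_0 \) and hence \( q_1 = fm/k \). This duality relation is the one step where distance-regularity of \( G \) is genuinely used.

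Second I would evaluate \( \chi^T E \chi \) for the characteristic vector \( \chi \) of a clique \( C \) of size \( c \). Since distinct vertices of \( C \) are all at graph-distance exactly one, \( \chi^T A_0 \chi = c \), \( \chi^T A_1 \chi = c(c-1) \), and \( \chi^T A_i \chi = 0 \) for \( i \geq 2 \). Positive semidefiniteness of \( E \) then gives
\[ 0 \leq \chi^T E \chi = \frac{f}{n}\Bigl( c + \tfrac{m}{k}\, c(c-1) \Bigr) = \frac{fc}{nk}\bigl( k + m(c-1) \bigr), \]
forcing \( k + m(c-1) \geq 0 \). Dividing by \( -m > 0 \) flips the inequality and yields \( c - 1 \leq -k/m \), i.e., \( c \leq 1 - k/m \). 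The only real obstacle is justifying \( q_1 = fm/k \); once that is in hand, the rest is a direct inner-product computation.
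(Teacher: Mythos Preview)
Your argument is correct and is essentially the classical proof of the Delsarte clique bound (as in Delsarte's thesis or Godsil's book): use the positive semidefiniteness of the primitive idempotent \( E \) for the smallest eigenvalue, compute \( \chi^{T}E\chi \) for the characteristic vector of a clique via the dual eigenvalues \( q_0 = f \) and \( q_1 = fm/k \), and read off the inequality. Your derivation of \( q_1 = fm/k \) from \( AE = mE \) by comparing \( A_0 \)-coefficients is fine.

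There is nothing to compare against in the paper itself: the lemma is stated there with a citation to~\cite{delsarte} and~\cite[p.~276]{godsil} and no proof is given. Your write-up supplies precisely the standard argument behind those citations.
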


A clique of order \( 1 - k/m \) is called a \emph{Delsarte clique}. A clique geometry on \( G \)
is a set of maximal cliques such that every edge of \( G \) is contained in exactly one clique from the set.
A graph is called \emph{Delsarte-geometric} if it has a clique geometry consisting of Delsarte cliques.

\begin{lemma}[{\cite[Lemma~2.15]{kivvaJH}}]\label{mint}
	Let \( G \) be a Delsarte-geometric distance-regular graph of diameter at least~\( 2 \)
	with smallest eigenvalue \( m \).  Let \( \mathcal{C} \) be a clique geometry on \( G \) consisting of Delsarte cliques.
	Then \( m \) is an integer and every vertex in \( G \) belongs to precisely \( -m \) cliques from \( \mathcal{C} \).
\end{lemma}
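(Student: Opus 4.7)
The plan is to count, at a fixed vertex $x$, the edges of $G$ incident to $x$ in two different ways: once using the valency, and once using the clique geometry $\mathcal{C}$.

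First I would observe that since $\mathcal{C}$ is a clique geometry, every edge of $G$ lies in exactly one clique of $\mathcal{C}$. In particular, for a fixed vertex $x \in V$, every neighbour $y$ of $x$ lies together with $x$ in exactly one clique $C \in \mathcal{C}$, and conversely every clique $C \in \mathcal{C}$ containing $x$ contributes exactly $|C| - 1$ neighbours of $x$. Letting $t(x) = |\{C \in \mathcal{C} : x \in C\}|$, this gives the double count
\[
k \;=\; \sum_{\substack{C \in \mathcal{C} \\ x \in C}} (|C| - 1).
\]

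Next I would use that every $C \in \mathcal{C}$ is a Delsarte clique, so $|C| = 1 - k/m$ and $|C| - 1 = -k/m$. Substituting into the identity above gives $k = t(x) \cdot (-k/m)$, and dividing by $k > 0$ yields
\[
t(x) \;=\; -m.
\]
Since $t(x)$ is a positive integer independent of $x$, this simultaneously shows that $m$ is a (negative) integer and that every vertex of $G$ belongs to exactly $-m$ cliques of $\mathcal{C}$, which is the desired conclusion.

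There is no real obstacle here: the statement is essentially a two-way counting argument, and the only nontrivial ingredients are the definitions of a Delsarte clique and of a clique geometry. The only minor point to verify is that the sum on the right-hand side is nonempty, i.e.\ $t(x) \geq 1$, which follows because any edge at $x$ (which exists since the valency $k \geq 1$) is contained in some clique of $\mathcal{C}$ passing through $x$.
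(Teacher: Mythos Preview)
Your argument is correct: the double count of edges at a fixed vertex, combined with the fact that all cliques in the geometry have the same size $1-k/m$, immediately forces $t(x)=-m$ and hence $m\in\mathbb{Z}$. The only implicit step worth making explicit is that two distinct cliques of $\mathcal{C}$ through $x$ meet only in $x$ (otherwise the edge between $x$ and a second common vertex would lie in two cliques of $\mathcal{C}$), so the sets $C\setminus\{x\}$ partition $\Gamma_1(x)$ and the sum $\sum_{C\ni x}(|C|-1)$ really equals $k$.

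Note that the present paper does not give its own proof of this lemma; it is simply quoted from \cite[Lemma~2.15]{kivvaJH}. Your proof is the standard one and is essentially what appears there.
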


A graph \( G \) is called \emph{sub-amply regular} with parameters \( (n, k, \lambda, \mu) \)
if it is a regular graph on \( n \) vertices of degree \( k \) and every two adjacent vertices
have exactly \( \lambda \) neighbors in common while every two vertices at distance~2
from each other have at most \( \mu \) neighbors in common. A distance-regular graph
is sub-amply regular for \( \lambda = a_1 \) and \( \mu = c_2 \).

\begin{lemma}[{\cite[Lemma~2.5]{kivvaJH}}]\label{mulambda}
	For a distance-regular graph of diameter at least~\( 2 \) one has \( 2\lambda \leq k + \mu \).
\end{lemma}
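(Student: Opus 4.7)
The plan is to fix adjacent vertices $x,y$ and decompose the two neighborhoods with respect to each other. Set $A = \Gamma_1(x) \cap \Gamma_1(y)$, of size $\lambda$, and $B = \Gamma_1(x) \setminus (A \cup \{y\})$ and $B' = \Gamma_1(y) \setminus (A \cup \{x\})$, each of size $b_1 = k - \lambda - 1 \geq 1$ by the chain of inequalities on the $b_j$. Pick any $w \in B$; by construction $w \sim x$ and $d(w,y) = 2$.

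The crux is to pinch the quantity $|\Gamma_1(w) \cap B|$ from two sides, applying distance-regularity to $w$. Since $w \sim x$, the set $\Gamma_1(w) \cap \Gamma_1(x)$ has size $\lambda$, and since $w \not\sim y$ this set is contained in $A \cup B$; hence $|\Gamma_1(w) \cap A| + |\Gamma_1(w) \cap B| = \lambda$. Since $d(w,y) = 2$, the set $\Gamma_1(w) \cap \Gamma_1(y)$ has size $\mu$, and since $w \sim x$ it contains $x$, so the remaining $\mu - 1$ elements lie in $A \cup B'$; this yields $|\Gamma_1(w) \cap A| \leq \mu - 1$. Substituting into the first identity gives
\[
|\Gamma_1(w) \cap B| \geq \lambda - \mu + 1.
\]

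For the matching upper bound, $w \notin \Gamma_1(w)$ gives the trivial $|\Gamma_1(w) \cap B| \leq b_1 - 1$. Combining, $\lambda - \mu + 1 \leq b_1 - 1 = k - \lambda - 2$, which rearranges to $2\lambda \leq k + \mu - 3$, slightly stronger than claimed. The main hurdle is conceptual: one must notice that a single witness $w \in B$ plays a double role, as its neighborhood must simultaneously account for the $a_1$ common neighbors with $x$ and the $c_2$ common neighbors with $y$. Once $w$ is fixed, both bounds on $|\Gamma_1(w) \cap B|$ are immediate from the definitions, and no deeper property of distance-regular graphs enters the argument.
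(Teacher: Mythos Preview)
Your argument is correct. The paper does not actually prove this lemma: it is quoted verbatim from Kivva~\cite[Lemma~2.5]{kivvaJH} and used as a black box, so there is no in-paper proof to compare against. Your double-counting of \( |\Gamma_1(w)\cap B| \) via the \(a_1\)-relation to \(x\) and the \(c_2\)-relation to \(y\) is exactly the standard elementary route to this inequality, and every step checks out. One small remark: your upper bound \( |\Gamma_1(w)\cap B|\le b_1-1 \) (using \(w\in B\setminus\Gamma_1(w)\)) yields \(2\lambda\le k+\mu-3\), which is genuinely sharper than the stated \(2\lambda\le k+\mu\) and is tight e.g.\ for \(C_5\); the weaker form \( |\Gamma_1(w)\cap B|\le |B|=b_1 \) already suffices for the lemma as stated.
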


The following theorem by Bang and Koolen gives a sufficient condition for a distance-regular graph to be Delsarte-geometric.

\begin{proposition}[\cite{bang}]\label{bang}
	If \( \lambda > m^2 \mu \) for a distance-regular graph \( G \),
	then \( G \) is Delsarte-geometric.
\end{proposition}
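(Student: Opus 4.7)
The plan is to combine Metsch's theorem (explicitly listed among the paper's tools in the introduction) with the Delsarte bound on clique size. The hypothesis $\lambda > m^2\mu$ is an ``edge-expansion'' style inequality designed precisely to force the local neighborhood structure to decompose cleanly into cliques, so the proof should proceed by first constructing candidate cliques, then checking they form a clique geometry, and finally verifying they meet the Delsarte bound.

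First, for each edge $\{x,y\}$ consider the set $\Lambda(x,y)$ of its $\lambda$ common neighbors, and define the candidate clique $C(x,y)$ to be $\{x,y\}\cup\Lambda(x,y)$ closed under the relation ``shares $\ge \mu+1$ common neighbors with $x$''. The key claim is that this relation is a disjoint union of cliques inside each neighborhood $\Gamma_1(x)$: if $u,v\in\Gamma_1(x)$ both share many common neighbors with $x$ but are themselves non-adjacent, then they together with $x$ would produce a pair at distance $2$ with more than $\mu$ common neighbors, contradicting distance-regularity. A recursive version of this argument — iterating inside the local graph at $x$, whose smallest eigenvalue is controlled by $m$ via interlacing — is what requires the full strength $\lambda > m^2\mu$: one factor of $|m|$ is needed to force the ``close'' pairs in $\Gamma_1(x)$ to be pairwise adjacent, and a second factor to ensure that the resulting cliques through a vertex are disjoint outside $\{x\}$. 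Once these two properties are verified, Metsch's theorem applies and certifies that the $C(x,y)$ form a clique geometry, i.e.\ each edge lies in a unique maximal clique of the family.

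Second, one must show every $C\in\mathcal{C}$ has size exactly $1-k/m$. Delsarte's bound gives $|C|\le 1-k/m$ for free. For the matching lower bound, fix a vertex $v$ and count in two ways: on the one hand, $v$ is incident to $k$ edges, each lying in a unique clique from $\mathcal{C}$; on the other hand, by Lemma~\ref{mint} (in the form we need to re-derive here from interlacing), $v$ belongs to exactly $-m$ cliques of $\mathcal{C}$, and each contributes $|C|-1$ edges at $v$. Combining, $k=(-m)(|C|-1)$, forcing $|C|=1-k/m$.

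The main obstacle is the Metsch verification step: translating the single spectral inequality $\lambda > m^2\mu$ into the purely combinatorial hypotheses of Metsch's theorem. The appearance of $m^2$ — rather than $|m|$ — is not cosmetic, and traces back to the need for two independent estimates on the local graph (one to force candidate cliques to be cliques, another to force uniqueness). Carrying out this estimate requires the smallest-eigenvalue interlacing bound on induced subgraphs, which is the technical heart of Bang and Koolen's original argument.
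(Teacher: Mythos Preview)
The paper does not give its own proof of this proposition: it is quoted directly from Bang and Koolen~\cite{bang} and used as a black box, with no argument supplied. So there is nothing in the paper to compare your attempt against.

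Regarding your sketch on its own merits: the overall shape (Metsch-type local clique decomposition followed by a Delsarte-bound check) is indeed how the Bang--Koolen argument goes, but what you wrote is a plan rather than a proof, and the second half has a genuine gap. You invoke Lemma~\ref{mint} to conclude that each vertex lies in exactly $-m$ cliques of the geometry, but that lemma \emph{assumes} $G$ is already Delsarte-geometric; your parenthetical ``in the form we need to re-derive here from interlacing'' is doing all the work and is not substantiated. Moreover, your double-counting step $k=(-m)(|C|-1)$ silently assumes that all cliques of $\mathcal{C}$ through a given vertex have the same size, which is exactly what you are trying to prove. In the actual Bang--Koolen argument one first shows (via the Metsch machinery and the hypothesis $\lambda>m^2\mu$) that every clique in the geometry has size at least $1+\lambda-(|m|-1)\mu > 1 + k/|m| - \mu$, and then a separate averaging/eigenvalue argument forces equality with the Delsarte bound; the number $-m$ of cliques through a vertex is a consequence, not an input. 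If you want to complete your route, you must either carry out that lower-bound estimate explicitly or find an independent reason why the cliques are equicardinal before counting.
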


A result of Metsch~\cite{metsch} gives constraints on parameters of sub-amply regular graphs which
guarantee the existence of a certain clique geometry. Babai and Wilmes~\cite[Theorem~3]{wilmes} gave an asymptoric
version of Metsch's theorem with a simpler proof, which states that a sub-amply regular graph with \( k\mu = o(\lambda^2) \)
with respect to \( n \to \infty \) has a maximal clique of order approximately \( \lambda \).
It will be more convenient for our applications to have an explicit version, which we derive from Metsch's theorem directly.

\begin{proposition}\label{clique}
	Let \( G \) be a sub-amply regular graph with parameters \( (n, k, \lambda, \mu) \)
	such that \( \lambda^2 \geq 4k\mu \). Then \( G \) contains a clique of size at least \( \lambda/2 \).
\end{proposition}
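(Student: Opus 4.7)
The plan is to derive the proposition as an explicit quantitative consequence of Metsch's clique lemma~\cite{metsch}. Metsch's theorem, roughly, asserts that for a sub-amply regular graph with parameters $(n, k, \lambda, \mu)$ satisfying a suitable discriminant inequality relating $k$, $\lambda$, and $\mu$, every edge is contained in a unique maximal clique of controlled size. The hypothesis $\lambda^2 \geq 4 k \mu$ is designed to match this discriminant when the target clique size is $\lceil \lambda/2 \rceil$.

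Concretely, I would fix an edge $\{x,y\}$ with common neighborhood $C_0 = N(x) \cap N(y)$, so $|C_0| = \lambda$, and extend it greedily to a maximum clique $K \subseteq C_0 \cup \{x,y\}$ of size $t$. By the maximality of $K$, each $w \in C_0 \setminus K$ is non-adjacent to some $u \in K$; combined with the bound $|N(w) \cap N(u)| \leq \mu$, this forces $w$ to be adjacent to at most $\mu$ vertices of $K$, and therefore to have at least $t - \mu$ non-neighbors in $K$. I would then double-count pairs $(w,v)$ with $w \in C_0 \setminus K$, $v \in K \setminus \{x,y\}$, and $w \not\sim v$: the lower bound $(\lambda - t + 2)(t - \mu)$ comes from the previous observation, while the upper bound is obtained by controlling, for each fixed $v \in K \setminus \{x,y\}$, the number of $w \in C_0$ with $w \not\sim v$.

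Assuming $t < \lambda/2$, the resulting inequality should contradict $\lambda^2 \geq 4k\mu$, so $t \geq \lambda/2$. The main technical obstacle is the sharpness of the upper bound used in the double count: the naïve estimate $|\{w \in C_0 \setminus K : w \not\sim v\}| \leq k - \lambda - 1$, obtained from adjacency with $x$ alone, yields only $(\lambda - t + 2)(t - \mu) \leq (t - 2)(k - \lambda - 1)$, which turns out to be compatible with $t < \lambda/2$ given only $\lambda^2 \geq 4k\mu$ together with $2\lambda \leq k + \mu$ from Lemma~\ref{mulambda}. A sharper estimate—likely via a second application of the $\mu$-bound, now to pairs $(w,v)$ of vertices both lying in $C_0$ (so that all their common neighbors, beyond $x$ and $y$, lie in a restricted set)—is needed to close the argument and recover the constant $4$ in the hypothesis. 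This sharpening is essentially the content of Metsch's theorem, and would constitute the main computational step of the proof.
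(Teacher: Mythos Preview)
Your proposal has a genuine gap: the ``sharpening'' you defer at the end is the entire content of the argument. You correctly observe that the na\"ive double count yields only $(\lambda - t + 2)(t - \mu) \leq (t-2)(k - \lambda - 1)$, and that this is too weak to force $t \geq \lambda/2$ from $\lambda^2 \geq 4k\mu$; you then say that the missing refinement ``is essentially the content of Metsch's theorem'' and stop. But that means you have not actually proved anything---you have reduced the proposition to Metsch's theorem, which was your stated plan from the first sentence, while the intervening double-counting sketch neither proves Metsch's result nor bypasses it. (Incidentally, Lemma~\ref{mulambda} is stated only for distance-regular graphs, not general sub-amply regular graphs, so even your check that the na\"ive bound fails is not quite on firm ground in this generality---though this is a side issue.)

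The paper's own proof does exactly what your first sentence promises, without the detour: it quotes the explicit form of Metsch's result (\cite[Result~2.1]{metsch}, see also~\cite[Corollary~1]{wilmes}), namely that if $(\lambda+1)^2 > (3k + \lambda + 1)(\mu - 1)$ then $G$ contains a clique of size at least $\lambda + 2 - (\lceil (3/2)k/(\lambda+1)\rceil - 1)(\mu-1)$, and then checks two easy inequalities. First, since $\lambda \leq k-1$ one has $3k+\lambda+1 \leq 4k$, so $\lambda^2 \geq 4k\mu$ implies Metsch's hypothesis. Second, the clique-size conclusion is at least $\lambda/2 + 2$ because $\lambda/2 \geq \tfrac{3k}{2(\lambda+1)}(\mu-1)$, which again follows from $\lambda^2 \geq 4k\mu$. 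That is the whole proof. Your greedy-extension and double-counting outline is in fact the opening move of Metsch's own argument, so you were rediscovering the proof of the cited lemma rather than applying it; the efficient route is simply to invoke the lemma with its explicit constants and do the two lines of algebra.
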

\begin{proof}
	A corollary from the result of Metsch~\cite[Result~2.1]{metsch}, see also~\cite[Corollary~1]{wilmes}, implies that if
	\begin{equation}\label{meq}
		(\lambda+1)^2 > (3k + \lambda + 1)(\mu - 1)
	\end{equation}
	then \( G \) contains a clique of size at least
	\begin{equation}\label{cleq}
		\lambda + 2 - (\lceil (3/2)k/(\lambda+1) \rceil - 1)(\mu - 1).
	\end{equation}
	Since \( \lambda \leq k-1 \), we have \( 3k + \lambda + 1 \leq 4k \). Hence inequality \( \lambda^2 \geq 4k\mu \)
	implies inequality~(\ref{meq}) from Metsch's theorem, and we have a clique of size at least~(\ref{cleq}).
	We claim that
	\[
		\lambda + 2 - (\lceil (3/2)k/(\lambda+1) \rceil - 1)(\mu - 1) \geq \lambda/2 + 2.
	\]
	Indeed, this inequality is follows from a stronger one
	\[
		\frac{\lambda}{2} \geq \frac{3k}{2(\lambda + 1)}(\mu - 1),
	\]
	which is implied by our assumption \( \lambda^2 \geq 4k\mu \).
\end{proof}

\subsection{Coherent configurations and motion}\label{pCC}

Let \( \X = (V, X) \) be a tuple, where \( V \) is a set and \( X \) is a partition of \( V \times V \).
We say that \( \X \) is a \emph{homogeneous coherent configuration} if the following holds~\cite[Definitions~2.1.1 and 2.1.3]{ponom}:
\begin{itemize}
	\item The diagonal \( \{ (v, v) \mid v \in V \} \) lies in \( X \);
	\item For every relation \( R \in X \) its transpose \( \{ (v, u) \mid (u, v) \in R \} \) lies in \( X \);
	\item For any relations \( R, S, T \in X \) and any \( (u, v) \in R \), the number
		\[ c_{ST}^R = | \{ w \in V \mid (u, w) \in S,\, (w, v) \in T \} | \]
		does not depend on the choice of \( (u, v) \in R \).
\end{itemize}
The size of \( X \) is called the rank. The configuration is called \emph{primitive} if all its nondiagonal relations are connected.

If \( G \) is a distance-regular graph of diameter \( d \), then the relations
\[ X_i = \{ (x, y) \in V \times V \mid d(x, y) = i \}, \, i = 0, \dots, d, \]
form a coherent configuration of rank \( d+1 \) on \( V \). A distance-regular graph is
called \emph{primitive} if all its ``at distance \( i \)`` relations \( X_i \), \( i = 1, \dots, d \),
are connected. The corresponding coherent configuration is also primitive.

In a primitive coherent configuration diameters of nondiagonal relations are less than its rank~\cite[Exercise~8.4]{babaiUpd},
so for a primitive distance-regular graph diameters of \( X_i \), \( i = 1, \dots, d \), are at most \( d \).

The following fact is well-known.

\begin{lemma}[{\cite[Exercise~2.7.25]{ponom}, \cite[Exercise~2.2]{babaiUpd}}]\label{paths}
	Let \( \X = (V, X) \) be a homogeneous coherent configuration, and let \( R, R_1, \dots, R_{m-1} \in X \), \( m \geq 2 \)
	and \( (u, w) \in R \). Then the number of tuples \( (v_1, \dots, v_m) \in V^m \) such that \( (v_1, v_m) = (u, w) \) and \( (v_i, v_{i+1}) \in R_i \),
	\( i = 1, \dots, m-1 \) does not depend on the choice of \( (u, w) \in R \).
\end{lemma}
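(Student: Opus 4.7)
The plan is to proceed by a routine induction on $m\geq 2$, reducing the count to successive applications of the rank-$3$ axiom defining a coherent configuration. For the base case $m=2$ the only tuple is $(v_1,v_2)=(u,w)$, which contributes $1$ iff $(u,w)\in R_1$; since $X$ is a partition of $V\times V$, this happens exactly when $R=R_1$, so the count is determined by the class $R$ alone.

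For the inductive step I would fix $m\geq 3$ and write
\[
N_{R_1,\ldots,R_{m-1}}(u,w)=\bigl|\{(v_1,\ldots,v_m)\in V^m : v_1=u,\ v_m=w,\ (v_i,v_{i+1})\in R_i\ \forall i\}\bigr|.
\]
Next I would partition the tuples by the value $z=v_{m-1}$ and, for each $z$, by the unique class $S\in X$ containing $(u,z)$. The inductive hypothesis, applied to the shorter sequence $R_1,\ldots,R_{m-2}$ and the pair $(u,z)\in S$, says that the number of completions of $(v_1,\ldots,v_{m-1})$ depends only on $S$; call this quantity $n_S$. Thus
\[
N_{R_1,\ldots,R_{m-1}}(u,w)=\sum_{S\in X} n_S\cdot\bigl|\{z\in V : (u,z)\in S,\ (z,w)\in R_{m-1}\}\bigr|=\sum_{S\in X}n_S\,c_{S,R_{m-1}}^{R},
\]
where the second equality uses $(u,w)\in R$ together with the defining axiom of the configuration. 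The right-hand side depends only on $R$, closing the induction.

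An equivalent and arguably cleaner formulation is algebraic: if $A_T$ denotes the $0$-$1$ adjacency matrix of a relation $T\in X$, then the coherent configuration axiom is precisely the statement that the linear span of $\{A_T\}_{T\in X}$ is closed under matrix multiplication. By induction $A_{R_1}A_{R_2}\cdots A_{R_{m-1}}$ lies in this span, and its $(u,w)$-entry is exactly the number of tuples in question; since each $A_R$ is constant on the relation $R$, this entry depends only on the class $R\ni(u,w)$.

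The hard part here is essentially nil: the statement is a direct consequence of associativity of the structure constants of the configuration, and the only real design choice is whether to split the path at the last intermediate vertex (as above) so as to apply the axiom to the pair $(S,R_{m-1})$, at the first vertex and apply it to $(R_1,T)$, or to package the whole argument via the adjacency algebra. All three routes arrive at the same conclusion with comparable effort.
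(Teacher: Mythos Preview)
Your argument is correct: the induction on $m$, splitting at the last intermediate vertex and invoking the structure-constant axiom, is exactly the standard route, and the adjacency-algebra reformulation is an equally valid packaging of the same idea. There is nothing to compare against, however: the paper does not prove this lemma but merely cites it as a well-known exercise (\cite[Exercise~2.7.25]{ponom}, \cite[Exercise~2.2]{babaiUpd}) and moves on.
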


For a subset \( S \subseteq V \) and a graph \( G \) on \( V \) let \( \delta_G(S) \) denote the set of edges joining \( S \) with \( V \setminus S \) in~\( G \).
Recall that a relation \( G \) of a coherent configuration is symmetric if it coincides with its transpose, i.e.\ for every \( (u, v) \in G \) we also have \( (v, u) \in G \).
We can view such a relation as a regular undirected graph on~\( V \).

We need a generalization of a result for edge-transitive graphs from~\cite[Corollary~2.6]{szegedy}.
The proof rests on essentially the same argument as in~\cite{szegedy}.

\begin{proposition}\label{expand}
	Let \( \X = (V, X) \) be a homogeneous coherent configuration, and let \( G \in X \) be a connected symmetric relation of degree~\( k \)
	and diameter~\( d \). Then for every nonempty \( S \subseteq V \) with \( |S| \leq |V|/2 \) we have
	\[ \frac{|\delta_G(S)|}{|S|} \geq \frac{k}{2d}. \]
\end{proposition}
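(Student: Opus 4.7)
The plan is to follow the proof strategy of Babai and Szegedy~\cite[Corollary~2.6]{szegedy}, with Lemma~\ref{paths} playing the role of edge-transitivity. The idea is to introduce a fractional edge-usage statistic on geodesics and to show that, thanks to the coherent configuration structure, it is constant over the ordered edges of \( G \).

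For each ordered pair \( (u,v) \) of distinct vertices, let \( \mathcal{G}(u,v) \) denote the set of geodesics (shortest \( G \)-walks) from \( u \) to \( v \), and put the uniform distribution on it. By Lemma~\ref{paths} applied with all prescribed relations equal to \( G \), both the distance \( d(u,v) \) and the cardinality \( N^{\mathrm{geo}}(u,v) := |\mathcal{G}(u,v)| \) depend only on the relation \( R(u,v) \in X \). For each ordered edge \( e = (x,y) \in G \) define
\[
	f(e) := \sum_{(u,v)} \Pr\bigl[\, e \text{ lies on a uniformly random geodesic from } u \text{ to } v \,\bigr].
\]
A geodesic from \( u \) to \( v \) passing through \( e \) at position~\( i \) splits canonically as a geodesic \( u \to x \) of length \( i - 1 \), the edge~\( e \), and a geodesic \( y \to v \) of length \( d(u,v) - i \); consequently the number of such geodesics factorises as \( N^{\mathrm{geo}}(u,x) \cdot N^{\mathrm{geo}}(y,v) \). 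A careful application of Lemma~\ref{paths} to this factorisation, via the 4-point counts it controls, shows that \( f(e) \) is independent of the choice of \( e \in G \); denote the common value by \( f \). Summing over~\( e \) and using that each geodesic from \( u \) to \( v \) uses \( d(u,v) \leq d \) ordered edges,
\[
	nk \cdot f = \sum_{e \in G} f(e) = \sum_{(u,v)} d(u,v) \leq d\,n(n-1),
\]
whence \( f \leq d(n-1)/k \).

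For each ordered pair \( (u,v) \) with \( u \in S \) and \( v \notin S \), every geodesic from \( u \) to \( v \) uses at least one ordered edge \( (x,y) \in G \) with \( x \in S \) and \( y \notin S \), and there are exactly \( |\delta_G(S)| \) such ordered edges. Summing the corresponding probabilities over all such pairs yields at least \( |S|(n - |S|) \), whereas the same sum equals \( |\delta_G(S)| \cdot f \) by the constancy of \( f \). Therefore
\[
	|S|(n - |S|) \;\leq\; |\delta_G(S)| \cdot f \;\leq\; |\delta_G(S)| \cdot \frac{d(n-1)}{k},
\]
and rearranging gives \( |\delta_G(S)|/|S| \geq k(n - |S|)/(d(n-1)) \geq k/(2d) \) whenever \( |S| \leq n/2 \).

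The main obstacle is proving that \( f(e) \) is constant on the ordered edges of \( G \): this is the coherent-configuration analogue of edge-transitivity and does not come for free in a general homogeneous coherent configuration. The argument reduces to a careful accounting of the 4-point counts \( (u,x,y,v) \) produced by splitting a geodesic through \( e = (x,y) \) into two shorter geodesics, combined with iterated applications of Lemma~\ref{paths} to reduce every count appearing on the right-hand side to a structure constant of \( \X \) depending only on the relation of \( e \).
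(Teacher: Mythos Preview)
Your proposal follows essentially the same route as the paper: define a per-edge geodesic load \(f(e)\) (the paper's \(P\)), use Lemma~\ref{paths} to argue it is constant over \(e\in G\), compute \(nk\cdot f=\sum_{x,y}d(x,y)\), and then compare crossing-edge contributions for pairs \(u\in S\), \(v\notin S\). One small slip: your sentence ``the same sum equals \(|\delta_G(S)|\cdot f\)'' should read ``is at most \(|\delta_G(S)|\cdot f\)'', since restricting the pair-sum to \(u\in S\), \(v\notin S\) only gives an upper bound by \(f(e)\) for each crossing edge; the inequality still points the right way, and the paper handles this by explicitly extending the summation to all of \(V\times V\).
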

\begin{proof}
	Let \( d(x, y) \) denote the distance between vertices \( x, y \in V \) in the graph~\( G \).
	A path \( x_0, x_1, \dots, x_m \) in \( G \) will be called a geodesic if \( m = d(x_0, x_m) \).
	Note that \( x_m, \dots, x_1, x_0 \) is a different geodesic.

	By~\cite[Theorem~2.6.7]{ponom} the distance-\( i \) graph \( \{ (u, v) \in V \times V \mid d(u, v) = i \} \)
	is a union of some relations from \( X \), in particular, the distance between points \( x \) and \( y \) depends
	only on the relation where \( (x, y) \) lies.

	Let \( p(x, y) \) denote the number of geodesics between \( x \) and \( y \). If \( e \) is some edge of \( G \),
	we claim that the number
	\[ P = \sum_{x, y \in V} \frac{1}{p(x, y)} |\{ L \mid L \text{ is a geodesic from } x \text{ to } y \text{ passing through } e \}| \]
	does not depend on the choice of \( e \in G \). Indeed, by Lemma~\ref{paths}, the number \( p(x, y) \) depends only on the relation
	where \( (x, y) \) lies. The number of geodesics from \( x \) to \( y \) passing through \( e = (z, w) \) can be expressed as
	\[
		N_{(z, w)}(x, y) = 
		\begin{cases}
			p(x, z) \cdot p(w, y), & \text{ if } d(x, z) + d(w, y) = d(x, y) - 1,\\
			0, & \text{ otherwise.}
		\end{cases}
	\]
	Thus, by the paragraph above, this number depends only on the relations where \( (x, z) \), \( (w, y) \) and \( (x, y) \) lie (recall that \( (z, w) \) lies in \( G \)).
	We now have the following formula for \( P \):
	\[ P = \sum_{R, S, T \in X} \sum_{\substack{x, y \in V,\\(x, z) \in R, (w, y) \in S, (x, y) \in T}} \frac{1}{p(x, y)} N_{(z, w)}(x, y). \]
	By Lemma~\ref{paths}, the number of tuples \( (x, y) \) such that \( (x, z) \in R \), \( (w, y) \in S \), \( (x, y) \in T \) does not depend on the choice
	of \( (z, w) \in G \), hence \( P \) does not depend on the choice of \( e \in G \) as well.

	Set \( n = |V| \). We now claim that
	\[ nk \cdot P = \sum_{x, y \in V} d(x, y). \]
	The number of edges in \( G \) is exactly \( nk \), and since \( P \) does not depend on the choice of \( e \in G \) we have
	\begin{multline*}
		nk \cdot P = \sum_{e \in G} \sum_{x, y \in V} \frac{1}{p(x, y)} |\{ L \mid L \text{ is a geodesic from } x \text{ to } y \text{ passing through } e \}|\\
		= \sum_{x, y \in V} \frac{1}{p(x, y)} \sum_{e \in G} |\{ L \mid L \text{ is a geodesic from } x \text{ to } y \text{ passing through } e \}|.
	\end{multline*}
	Since each geodesic from \( x \) to \( y \) passes through \( d(x, y) \) edges (exactly once through each edge) and there are \( p(x, y) \) such geodesics, we have
	\begin{multline*}
		\sum_{x, y \in V} \frac{1}{p(x, y)} \sum_{e \in G} |\{ L \mid L \text{ is a geodesic from } x \text{ to } y \text{ passing through } e \}|\\
		= \sum_{x, y \in V} \frac{1}{p(x, y)} \cdot d(x, y)p(x, y) = \sum_{x, y \in V} d(x, y),
	\end{multline*}
	which proves the claim.

	Now we are ready to prove the main result. Choose some nonempty \( S \subseteq V \) with \( |S| \leq n/2 \). Note that
	for \( x \in S \), \( y \in V \setminus S \) a geodesic from \( x \) to \( y \) must pass through an edge in \( \delta_G(S) \), therefore:
	\[ |S| \cdot |V \setminus S| =
	\sum_{x \in S} \sum_{y \in V \setminus S} \frac{1}{p(x, y)} |\{ L \mid L \text{ is a geodesic from } x \text{ to } y \text{ through } \delta_G(S)\}|. \]
	We extend summation to the whole of \( V \) and derive
	\begin{multline*}
		|S| \cdot |V \setminus S| \leq
		\sum_{x, y \in V} \frac{1}{p(x, y)} |\{ L \mid L \text{ is a geodesic from } x \text{ to } y \text{ through } \delta_G(S)\}|\\
		\leq \sum_{x, y \in V} \frac{1}{p(x, y)} \sum_{e \in \delta_G(S)} |\{ L \mid L \text{ is a geodesic from } x \text{ to } y \text{ through } e\}| \\
		= \sum_{e \in \delta_G(S)} \sum_{x, y \in V} \frac{1}{p(x, y)} |\{ L \mid L \text{ is a geodesic from } x \text{ to } y \text{ through } e\}| \\
		= \sum_{e \in \delta_G(S)} P = |\delta_G(S)| \cdot P = \frac{|\delta_G(S)|}{nk} \sum_{x, y \in V} d(x, y) \leq \frac{dn}{k} \cdot |\delta_G(S)|.
	\end{multline*}
	Finally,
	\[ |\delta_G(S)| \geq |S| \cdot \frac{k}{d} \cdot \frac{n - |S|}{n} \geq |S| \cdot \frac{k}{2d}. \]
	The proposition is proved.
\end{proof}

Note that the proposition above applies to distance-regular graphs.

\begin{proposition}\label{eigenexp}
	Let \( G \) be a distance-regular graph of degree \( k \) and diameter~\( d \).
	Then the second largest eigenvalue satisfies \( \theta \leq k(1 - \frac{1}{8d^2}) \).
\end{proposition}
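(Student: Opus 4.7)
The plan is to combine the edge-expansion estimate just established in Proposition~\ref{expand} with the discrete Cheeger inequality, which converts edge expansion into a spectral gap for the adjacency matrix of a regular graph. A distance-regular graph of diameter $d$ is, by definition, a connected symmetric relation in its associated homogeneous coherent configuration with rank $d+1$ and graph-diameter $d$, so Proposition~\ref{expand} applies directly and yields
\[
	h(G) := \min_{\emptyset \ne S \subseteq V,\, |S| \leq |V|/2} \frac{|\delta_G(S)|}{|S|} \geq \frac{k}{2d}.
\]

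Next I would invoke the standard discrete Cheeger inequality for a $k$-regular graph, which states that the second largest eigenvalue $\theta$ of the adjacency matrix satisfies
\[
	h(G) \leq \sqrt{2k(k - \theta)},
\]
equivalently $k - \theta \geq h(G)^2/(2k)$. Substituting the bound on $h(G)$ gives
\[
	k - \theta \geq \frac{1}{2k}\left(\frac{k}{2d}\right)^2 = \frac{k}{8d^2},
\]
which rearranges to the desired estimate $\theta \leq k(1 - \tfrac{1}{8d^2})$.

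There is no genuine obstacle here beyond citing a standard inequality: the combinatorial core of the argument was already done in Proposition~\ref{expand}, and the Cheeger inequality is classical for regular graphs (Alon--Milman, Dodziuk). The only small point to be careful about is choosing the normalization convention consistently (working with the adjacency matrix, whose top eigenvalue is $k$, rather than the Laplacian), so that the inequality $h \leq \sqrt{2k(k - \theta)}$ is the one to apply. If one prefers a self-contained argument, the Cheeger bound can be proved in a few lines by taking a real eigenvector $f$ for $\theta$, normalizing so that the positive support has size at most $|V|/2$, and then running the standard level-set/Cauchy--Schwarz argument on $\sum_{uv \in E}(f(u)-f(v))^2 = (k-\theta)\sum_v f(v)^2$; but for expository economy a citation should suffice.
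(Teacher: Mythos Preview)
Your proof is correct and matches the paper's own argument exactly: the paper also derives the edge-expansion bound $h(G)\geq k/(2d)$ from Proposition~\ref{expand} and then applies the Cheeger inequality $h\leq\sqrt{2k(k-\theta)}$ to conclude. There is nothing to add or to change.
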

\begin{proof}
	By Proposition~\ref{expand} the edge expansion \( h \) of \( G \) is bounded below by \( k/(2d) \).
	The result follows from the Cheeger inequality \( h \leq \sqrt{2k(k - \theta)} \).
\end{proof}

Given a coherent configuration \( \X = (V, X) \) and \( x, y \in V \) let \( r(x, y) \) denote the unique
relation in \( X \) containing the pair \( (x, y) \).
For \( x, y \in V \) set \( D(x, y) = \{ z \in V \mid r(x, z) \neq r(y, z) \} \).
Define \( D_{\min}(\X) = \min_{x \neq y} |D(x, y)| \).

The following result is implicit in~\cite{babai, babaiUpd}.

\begin{proposition}\label{dmin}
	Let \( \X \) be a primitive coherent configuration on \( n \) points with maximal valency \( k_{\max} \).
	If all nondiagonal relations of \( \X \) have diameter at most~\( d \), then \( D_{\min}(\X) \geq (n - k_{\max})/d \).
\end{proposition}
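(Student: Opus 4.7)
The plan is to combine an averaging argument with a propagation step that exploits the diameter hypothesis. The setup rests on two basic features of $D$: first, since $\X$ is coherent, the cardinality $|D(x,y)|$ depends only on the relation $R = r(x,y)$, because $|V \setminus D(x,y)| = \sum_{S \in X} c_{S,S^T}^{R}$ is an intersection number; I will denote this common value by $D_R$. Second, directly from the definition, one obtains the triangle-type inclusion $D(x,z) \subseteq D(x,y) \cup D(y,z)$: any $w$ outside both right-hand sets satisfies $r(x,w) = r(y,w) = r(z,w)$. Iterating this along a path yields a telescoping upper bound on $|D|$.

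The first step is to exhibit a single pair $(u, w^*)$ for which $|D(u, w^*)|$ is at least $n - k_{\max}$. I would fix $u \in V$ and compute
\[
\sum_{w \in V} |D(u,w)| \;=\; \sum_{z \in V} |\{w \in V : r(w,z) \neq r(u,z)\}|.
\]
For $z$ with $r(u,z) = S$, the set $\{w : r(w,z) = S\}$ has size $k_{S^T} = k_S$, so the inner count equals $n - k_{r(u,z)}$. Splitting off $z = u$ and grouping the remaining terms by $S = r(u,z)$ (each $S$ occurring for exactly $k_S$ vertices $z$) gives
\[
\sum_{w \neq u} |D(u,w)| \;=\; (n-1) + \sum_{S \text{ nondiag}} k_S(n - k_S) \;=\; (n-1)(n+1) - \sum_{S \text{ nondiag}} k_S^2.
\]
Using $\sum_{S \text{ nondiag}} k_S^2 \leq k_{\max} \sum_{S \text{ nondiag}} k_S = k_{\max}(n-1)$ and averaging over the $n-1$ choices of $w$ yields some $w^* \in V$ with $|D(u, w^*)| \geq n + 1 - k_{\max} \geq n - k_{\max}$.

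The second step is to propagate the lower bound to every relation. Let $S_0$ be a non-diagonal relation realizing $D_{S_0} = D_{\min}(\X)$. By primitivity $S_0$ is connected, and by hypothesis its diameter is at most $d$, so there is a path $u = u_0, u_1, \dots, u_m = w^*$ with $m \leq d$ along edges of $S_0$ (or $S_0^T$, which carries the same parameter since $D(x,y) = D(y,x)$ forces $D_{S_0} = D_{S_0^T}$). Iterating the triangle inequality gives
\[
n - k_{\max} \;\leq\; |D(u, w^*)| \;\leq\; \sum_{i=1}^m |D(u_{i-1}, u_i)| \;=\; m \cdot D_{\min}(\X) \;\leq\; d \cdot D_{\min}(\X),
\]
which is the desired estimate.

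I do not anticipate a substantive obstacle. The coherence-based reduction to $D_R$, the triangle inequality, and the double-counting identity all follow from the definitions, and the final bound is an immediate arithmetic consequence. The one point worth flagging is the identity $k_S = k_{S^T}$ used in the averaging step; this follows from $n k_S = |S| = n k_{S^T}$ in a homogeneous coherent configuration. Recognizing the right quantity to average and routing the bound through a single favourable pair $(u, w^*)$ is really the only creative move.
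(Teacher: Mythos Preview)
Your argument is correct and is essentially the standard Babai argument that the paper merely cites (Lemma~8.9 and Corollary~8.12 of~\cite{babaiUpd}): the triangle inequality $D(x,z)\subseteq D(x,y)\cup D(y,z)$ together with a pair $(u,w^*)$ satisfying $|D(u,w^*)|\geq n-k_{\max}$, propagated along a path of length at most~$d$ in a minimizing relation. The paper gives no details beyond the citation, so you have supplied a self-contained proof of the same statement by the same route; the averaging computation you use to locate $w^*$ is one of the usual ways to obtain that large pair.
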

\begin{proof}
	Follows from~\cite[Lemma~8.9]{babaiUpd} and~\cite[Corollary~8.12]{babaiUpd}.
\end{proof}

Next result gives a lower bound on \( D_{\min} \) for a configuration corresponding to a distance-regular graph in terms of its parameters.

\begin{proposition}[{\cite[Proposition~4.7]{kivvaGap}}]\label{bandc}
	Let \( G \) be a primitive distance-regular graph on \( n \) vertices of diameter \( d \geq 2 \) and valency \( k \),
	and let \( \X \) be the associated coherent configuration. Let \( \epsilon > 0 \).
	Suppose that for some \( j \in \{ 1, \dots, d-1 \} \) the inequalities \( b_j \geq \epsilon k \) and \( c_{j+1} \geq \epsilon k \) hold.
	Then \( D_{\min}(\X) \geq \epsilon n / d \).
\end{proposition}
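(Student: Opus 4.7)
The plan is to reduce the statement to Proposition~\ref{dmin}, which for a primitive coherent configuration all of whose nondiagonal relations have diameter at most~\( d \) already yields \( D_{\min}(\X) \geq (n - k_{\max})/d \).  Since our \( \X \) is the coherent configuration of a primitive distance-regular graph of diameter \( d \), this diameter hypothesis holds (as recalled in Subsection~\ref{pCC}), so it remains to convert the local density assumption \( b_j, c_{j+1} \geq \epsilon k \) into a global separation of the form \( n - k_{\max} \geq c\,\epsilon n \) with an absolute constant \( c > 0 \).

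Rewriting the hypothesis, the identity \( k_{j+1}/k_j = b_j/c_{j+1} \) combined with the trivial upper bounds \( b_j, c_{j+1} \leq k \) gives the two-sided estimate
\[
\epsilon \leq \frac{k_{j+1}}{k_j} \leq \frac{1}{\epsilon}.
\]
Now invoke the log-concavity of the sphere sequence recalled in Subsection~\ref{pDrg}: the ratios \( k_{i+1}/k_i \) are non-increasing in \( i \).  Let \( j^* \) be the index where \( k_i \) attains its maximum \( k_{\max} \).  If \( j^* > j \), monotonicity applied at \( i = j^*-1 \) gives \( k_{j^*}/k_{j^*-1} \leq k_{j+1}/k_j \leq 1/\epsilon \), so \( k_{j^*-1} \geq \epsilon k_{j^*} \).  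If instead \( j^* \leq j \), monotonicity applied at \( i = j^* \) gives \( k_{j^*+1}/k_{j^*} \geq k_{j+1}/k_j \geq \epsilon \), so \( k_{j^*+1} \geq \epsilon k_{j^*} \).  Either way, the peak has an immediate neighbour whose size is at least an \( \epsilon \)-fraction of its own.

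Summing at the peak one obtains \( n \geq 1 + k_{j^*} + k_{j^* \pm 1} \geq (1+\epsilon) k_{\max} \), so \( n - k_{\max} \geq \epsilon n/(1+\epsilon) \).  Plugging into Proposition~\ref{dmin} produces the claimed lower bound on \( D_{\min}(\X) \), with the stated constant \( \epsilon \) recovered (up to an absolute factor that is harmless throughout the paper).

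The main obstacle is that the hypothesis only gives information at the single index \( j \), while the peak \( j^* \) of the sphere sequence could a priori lie anywhere in \( \{0, 1, \dots, d\} \).  The saving point is the log-concavity \( k_{i+1}/k_i \searrow \), already in the paper's preliminaries: it allows the bound at the \( (j,j+1) \)-interface to be transported to the pair \( (j^*-1, j^*) \) or \( (j^*, j^*+1) \) adjacent to the peak, without needing any further combinatorial input beyond Proposition~\ref{dmin}.
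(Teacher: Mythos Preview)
The paper does not prove this proposition; it is quoted from~\cite{kivvaGap} as a preliminary. So your argument is being evaluated on its own merits.

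Your reduction to Proposition~\ref{dmin} via the log-concavity of the sphere sizes is sound. The two-sided control \( \epsilon \leq k_{j+1}/k_j \leq 1/\epsilon \) together with the monotonicity of the ratios \( k_{i+1}/k_i \) indeed forces the peak \( k_{j^*} \) to have a neighbouring sphere of size at least \( \epsilon k_{j^*} \); the boundary cases \( j^* = 1 \) and \( j^* = d \) are handled since \( 1 \leq j \leq d-1 \), and \( j^* \geq 1 \) because \( k_1 = k \geq 2 > 1 = k_0 \) for \( d \geq 2 \). This is in fact the same mechanism the paper itself deploys in Section~\ref{pgraphs} when treating the case \( c_2 > k/(20d^4) \), so your line of attack is entirely in keeping with the paper's methods.

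The only issue, which you already flag, is that you arrive at
\[
D_{\min}(\X) \;\geq\; \frac{n - k_{\max}}{d} \;\geq\; \frac{\epsilon}{(1+\epsilon)}\cdot\frac{n}{d},
\]
which is the stated bound divided by \( 1+\epsilon \leq 2 \). Thus you have proved a slightly weaker statement than the one written. You are right that the loss is an absolute factor that is immaterial for every application in this paper: Proposition~\ref{bandc} is only ever used to produce \( \motion(G) \geq Cn/d^6 \) with an unspecified universal constant~\( C \). If the exact constant \( \epsilon/d \) were needed, this route through Proposition~\ref{dmin} does not appear to deliver it, and one would have to go back to Kivva's original direct argument.
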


Given a permutation \( g \in \Sym(V) \) define \( \supp(g) = \{ x \in V \mid x^g \neq x \} \).
The minimal degree of a permutation group \( H \leq \Sym(V) \) is the minimum over nonidentity \( g \in H \) of \( |\supp(g)| \),
i.e.\ it is the minimal number of points moved by a nonidentity permutation of \( H \).

For a graph \( G \) (a coherent configuration \( \X \)) the \emph{motion} is the minimal degree of the full automorphism
group of \( G \) (or \( \X \), respectively). We denote it by \( \motion(G) \) or \( \motion(\X) \).
If \( G \) is distance-regular, then the full automorphism groups of \( G \) and the associated coherent configuration \( \X \)
coincide, so \( \motion(G) = \motion(\X) \) in this case.

\begin{proposition}[{\cite[Exercise~9.5]{babaiUpd}}]\label{motion}
	For a coherent configuration \( \X \) one has \( \motion(\X) \geq D_{\min}(\X) \).
\end{proposition}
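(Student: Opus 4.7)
The plan is to show that for every nonidentity automorphism $g$ of $\X$, the support of $g$ contains a set of the form $D(x, x^g)$. Taking the minimum over all such pairs then gives $|\supp(g)| \ge D_{\min}(\X)$, which is exactly what we want.

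Concretely, I would start with an arbitrary nonidentity $g \in \Aut(\X)$ and pick any $x \in \supp(g)$, setting $y = x^g$, so that $x \neq y$. The key observation is that if $z \in V$ is fixed by $g$, then since $g$ preserves every relation in $X$ we have $r(x, z) = r(x^g, z^g) = r(y, z)$. Contrapositively, every $z \in D(x, y)$ must be moved by $g$, so $D(x, y) \subseteq \supp(g)$. Since $x \neq y$, this gives $|\supp(g)| \geq |D(x, y)| \geq D_{\min}(\X)$, and taking the minimum over all nonidentity $g$ yields the proposition.

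There is essentially no hard step: the statement is a direct consequence of the definition of an automorphism of a coherent configuration (it permutes the blocks $R \in X$ setwise, hence preserves the function $r$) together with the definition of $D(x, y)$ as the set of points distinguishing $x$ from $y$ through the partition. The only mild point to be careful about is the choice $x \in \supp(g)$, which ensures $y \neq x$ so that the pair $(x, y)$ is eligible in the minimum defining $D_{\min}(\X)$.
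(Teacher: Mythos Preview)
Your argument is correct and is precisely the standard proof of this fact; the paper does not supply its own proof but merely cites the result as an exercise in Babai's notes, and the intended solution is exactly the one you give.
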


Given a regular graph \( G \) of valency \( k \), let \( k = \xi_1 \geq \dots \geq \xi_m \) be its eigenvalues.
The \emph{zero-weight spectral radius} of \( G \) is \( \xi = \max_{i = 2, \dots, m} |\xi_i| \). For a distance-regular graph,
\( \xi = \max \{ \theta, \, -m \} \).

\begin{proposition}[{\cite[Proposition~12]{babaiAut}}]\label{zerow}
	Let \( G \) be a regular graph of valency \( k \) on \( n \) vertices
	with the zero-weight spectral radius \( \xi \). If every pair of vertices has at most \( q \)
	common neighbors in \( G \), then \( \motion(G) \geq n(k - \xi - q)/k \).
\end{proposition}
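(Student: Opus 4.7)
The plan is to take any nontrivial automorphism \( g \in \Aut(G) \), set \( T = \supp(g) \) and \( t = |T| \), and sandwich the quantity \( 2e(T) = \chi_T^{\top} A \chi_T \) (twice the number of edges inside \( T \)) between a combinatorial lower bound and a spectral upper bound. Forcing these two bounds to be compatible will make \( t \) large.

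For the lower bound I will exploit the \( q \)-hypothesis in a very direct way. For any \( v \in T \), suppose \( w \) is a neighbor of \( v \) that is fixed by \( g \); then \( w = w^g \), and since \( g \) is an automorphism \( w \sim v \) gives \( w \sim v^g \). Hence every fixed neighbor of \( v \) lies in \( \Gamma(v) \cap \Gamma(v^g) \), a set of size at most \( q \) by hypothesis. It follows that \( v \) has at most \( q \) fixed neighbors, so \( |\Gamma(v) \cap T| \geq k - q \). Summing over \( v \in T \) yields
\[
2 e(T) \;=\; \sum_{v \in T} |\Gamma(v) \cap T| \;\geq\; t(k - q).
\]

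For the upper bound I will carry out the standard expander-mixing-style computation. Decompose \( \chi_T = (t/n)\,\mathbf{1} + x \) with \( x \perp \mathbf{1} \), so \( \|x\|^2 = t(n-t)/n \). Since \( A \mathbf{1} = k \mathbf{1} \) and the restriction of \( A \) to \( \mathbf{1}^{\perp} \) has spectral norm at most \( \xi \), the cross terms vanish and
\[
\chi_T^{\top} A \chi_T \;=\; \frac{k t^2}{n} + \langle A x, x \rangle \;\leq\; \frac{k t^2}{n} + \xi \cdot \frac{t(n-t)}{n}.
\]

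Combining the two estimates and dividing by \( t > 0 \) gives \( k - q \leq k t/n + \xi(n - t)/n \), which rearranges to
\[
t \;\geq\; \frac{n(k - \xi - q)}{k - \xi} \;\geq\; \frac{n(k - \xi - q)}{k},
\]
the last step using \( \xi \geq 0 \); this is the claimed lower bound on \( \motion(G) \). I do not anticipate any serious obstacle: the only conceptual point is the observation that fixed neighbors of \( v \in T \) are forced into \( \Gamma(v) \cap \Gamma(v^g) \), turning the hypothesis on common neighbors into a lower bound on the induced degree inside \( T \); once this is in place, the rest is a routine spectral calculation.
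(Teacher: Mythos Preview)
Your argument is correct. Note, however, that the paper does not supply its own proof of this proposition: it is quoted as \cite[Proposition~12]{babaiAut} and used as a black box. Your write-up is precisely the standard proof from that source---bound the induced degree of each $v\in\supp(g)$ from below via $\Gamma(v)\cap\Gamma(v^g)\le q$, bound $\chi_T^\top A\chi_T$ from above by the expander-mixing estimate, and compare. One tiny remark: the final inequality $n(k-\xi-q)/(k-\xi)\ge n(k-\xi-q)/k$ tacitly needs $k-\xi>0$, but this is automatic whenever the bound is nontrivial (if $k-\xi-q\le 0$ there is nothing to prove, and otherwise $k-\xi>q\ge 0$).
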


The next result is a slight modification of~\cite[Theorem~5.7]{kivvaJH}, but the proof stays essentially the same.
The only difference is that we use a stronger condition \( \xi \leq k(1 - \eta) \) instead of assuming that
\( c_j \leq \epsilon k \), \( b_j \leq \epsilon k \) for some \( j \in \{ 1, \dots, d-1 \} \).

\begin{proposition}[{\cite[Theorem~5.7]{kivvaJH}}]\label{dual}
	Let \( G \) be a Delsarte-geometric distance-regular graph of diameter \( d \geq 2 \)
	on \( n \) vertices. Suppose \( \mu = 1 \) and the smallest eigenvalue of \( G \) is \( m \), where \( -m \geq 3 \).
	Let \( 0 < \eta < 1/2 \) be some constant such that the zero-weight spectral radius satisfies \( \xi \leq k(1 - \eta) \)
	and \( k \geq \max \{ 4(-m)/\eta,\, m^2 \} \). Then \( \motion(G) \geq n\eta/4 \).
\end{proposition}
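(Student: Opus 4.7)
The plan is to follow Kivva's proof of~\cite[Theorem~5.7]{kivvaJH} essentially verbatim, substituting the spectral hypothesis $\xi \leq k(1 - \eta)$ in place of the intersection-array conditions $c_j \leq \epsilon k$ or $b_j \leq \epsilon k$ used there. The overall approach is to apply the Babai--Szegedy style spectral motion bound of Proposition~\ref{zerow} directly to $G$, so the ingredients needed are the spectral gap $k - \xi \geq k\eta$, which is now provided directly by our hypothesis, and a bound on the maximum number $q$ of common neighbors of any two vertices.

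First I would show that under the Delsarte-geometric hypothesis together with $\mu = 1$ we have $\lambda = -k/m - 1$, with every common neighbor of an adjacent pair lying inside the unique Delsarte clique containing that edge. Let $x \sim y$ both lie in the Delsarte clique $C$, and suppose for contradiction that $z \notin C$ is a common neighbor. For any $u \in C \setminus \{x, y\}$ the vertices $u$ and $z$ are distinct and at $G$-distance at most $2$ via $x$; the case $d(u, z) = 2$ is ruled out because both $x$ and $y$ would then be common neighbors of $u$ and $z$, contradicting $\mu = 1$. Hence $u \sim z$ for every $u \in C$, so $C \cup \{z\}$ is a clique of size $|C| + 1$, exceeding the Delsarte bound $1 - k/m$, a contradiction. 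Since $\mu = 1 \leq \lambda$, the value $q = \lambda = -k/m - 1$ is admissible in Proposition~\ref{zerow}.

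Feeding $q = \lambda$ into Proposition~\ref{zerow} and using $\xi \leq k(1 - \eta)$ yields
\[
  \motion(G) \geq \frac{n(k - \xi - \lambda)}{k} \geq n\Bigl( \eta - \frac{1}{-m} + \frac{1}{k} \Bigr),
\]
and the remainder of the argument reproduces Kivva's parameter manipulation. The hypotheses $k \geq 4(-m)/\eta$ and $k \geq m^2$ are designed precisely so that the negative contribution $n/(-m)$ can be absorbed into the $n\eta$ term and the final bound $\motion(G) \geq n\eta/4$ follows. The main obstacle is exactly this arithmetic bookkeeping, essentially a case split on the relative size of $-m$ and $1/\eta$, which Kivva carries out in detail; substituting our cleaner spectral hypothesis only simplifies the input to this step and leaves its combinatorial substance unchanged.
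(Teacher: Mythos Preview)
There is a genuine gap. Your computation of $\lambda = k/(-m) - 1$ is correct, and feeding it into Proposition~\ref{zerow} indeed gives
\[
  \motion(G) \;\geq\; n\Bigl(\eta - \frac{1}{-m} + \frac{1}{k}\Bigr).
\]
But this lower bound can be negative. Take $-m = 3$, $\eta = 0.012$ and $k = 1000$: then $k \geq 4(-m)/\eta = 1000$ and $k \geq m^{2} = 9$, so all the hypotheses hold, yet $\eta - 1/(-m) + 1/k \approx 0.012 - 0.333 + 0.001 < 0$. The condition $k \geq 4(-m)/\eta$ only gives $\eta \geq 4(-m)/k$, a quantity that tends to $0$ as $k \to \infty$; it does \emph{not} force $\eta$ to dominate $1/(-m)$, which is what your ``absorption'' step would need. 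No case split on the relative size of $-m$ and $1/\eta$ can rescue this, because the inequality coming from Proposition~\ref{zerow} applied to $G$ is simply too weak in this regime.

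Kivva's proof of~\cite[Theorem~5.7]{kivvaJH}, to which the paper defers without reproducing the argument, does not apply the spectral bound to $G$ itself. The proposition's label ``dual'' is the hint: when $\mu = 1$ the Delsarte-clique geometry is a partial linear space, and one passes to its \emph{dual}, the graph whose vertices are the Delsarte cliques with adjacency given by nonempty intersection. Proposition~\ref{zerow} is then applied to this dual graph, whose parameters are such that the analogue of $\lambda$ is small relative to its valency; the hypotheses $k \geq 4(-m)/\eta$ and $k \geq m^{2}$ are calibrated precisely for that computation, not for the one you wrote down. Your proposal omits this construction entirely, so the ``parameter manipulation'' you allude to is not the one Kivva actually performs.
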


Next proposition covers the case when \( \mu = 1 \) and \( m > -3 \).

\begin{proposition}[{\cite[Proposition~5.13]{kivvaJH}}]\label{m2}
	Let \( G \) be a Delsarte-geometric distance regular graph of diameter at least~\( 2 \).
	Suppose that \( \mu = 1 \), \( k > 4 \) and the smallest eigenvalue of \( G \) is \( -2 \).
	Then \( \motion(G) \geq n/16 \).
\end{proposition}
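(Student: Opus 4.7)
\emph{Proof plan.} The idea is to exploit the extreme rigidity forced by $m = -2$. By Lemma~\ref{mint} the Delsarte clique geometry $\mathcal{C}$ consists of cliques of size $1 + k/2$, and each vertex of $G$ lies in exactly two cliques; since $\mu = 1$, any two distinct Delsarte cliques meet in at most one vertex (else two vertices at distance $2$ would have at least two common neighbors), so the two cliques at each vertex partition its neighborhood. Consequently $G$ is isomorphic to the line graph $L(H)$ of the $(1 + k/2)$-regular graph $H$ whose vertices are the Delsarte cliques and whose edges are the vertices of $G$, and $\mu = 1$ forces $H$ to be $C_4$-free (a $4$-cycle in $H$ would yield a pair at $G$-distance $2$ with two common neighbors). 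A direct count then shows $\lambda = k/2 - 1$: the only common neighbors of two adjacent vertices lie in their common Delsarte clique, a further common neighbor producing a $4$-cycle in $H$.

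Since $k > 4$, Whitney's theorem gives $\Aut(G) \cong \Aut(H)$, so every nonidentity $\alpha \in \Aut(G)$ is induced by a nonidentity $\bar\alpha \in \Aut(H)$, and a vertex of $G$ (viewed as an edge $\{u, v\}$ of $H$) is fixed by $\alpha$ precisely when $\bar\alpha$ either fixes both $u, v$ or swaps them. Writing $M_H \subseteq V(H)$ for the support of $\bar\alpha$ and $e_H(M_H)$ for the number of edges of $H$ inside $M_H$, a short incidence count, with swap-edges bounded above by $|M_H|/2$, yields
\[
|\supp(\alpha)| \;\geq\; \bigl(1 + k/2\bigr)\,|M_H| \;-\; 2\,e_H(M_H) \;-\; |M_H|/2,
\]
so the $C_4$-freeness of $H$ (via the K\H{o}v\'ari--S\'os--Tur\'an bound $e_H(M_H) = O(|M_H|^{3/2})$) reduces the required motion estimate to a lower bound on $|M_H|$.

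The main obstacle is producing that lower bound: nothing in the argument so far prevents $\bar\alpha$ from being a near-trivial automorphism of $H$, such as a single transposition, so one needs a mechanism to force $|M_H|$ to be large. Resolving this requires exploiting the full strength of distance-regularity of $G$, which forces $H$ to admit strong transitivity properties (in particular, $\Aut(H)$ acts transitively on the incidences of $H$), and in turn forces the moved set $M_H$ to be at least a constant fraction of $|V(H)| = 4n/(k+2)$. Combined with the displayed inequality and the standing hypothesis $k > 4$, this should give $|\supp(\alpha)| \geq n/16$. Any small sporadic cases left uncovered by this orbit argument, if present, can be ruled out directly, since the constraints $\mu = 1$, $m = -2$ and $k > 4$ leave only a very short list of admissible parameter tuples.
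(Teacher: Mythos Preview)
The paper does not give its own proof of this proposition; it is quoted from Kivva~\cite[Proposition~5.13]{kivvaJH}. So I evaluate your proposal on its own merits.

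Your structural reduction is correct and nicely set up: with $m=-2$ and $\mu=1$ the Delsarte geometry exhibits $G$ as the line graph of a $(1+k/2)$-regular graph $H$ of girth at least~$5$, Whitney's theorem applies for $k>4$, and the displayed support inequality relating $|\supp(\alpha)|$ to $|M_H|$ and $e_H(M_H)$ is essentially right.

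The genuine gap is exactly where you flag it, but your proposed resolution is wrong. Distance-regularity of $G$ is a purely combinatorial condition on intersection numbers; it implies \emph{nothing} about transitivity of $\Aut(G)$ or $\Aut(H)$. Distance-regular graphs can have trivial automorphism group, and even in this particular family the hypothetical Moore graph of valency~$57$, if it exists, would yield a distance-regular $L(H)$ while $H$ is known not to be vertex-transitive. The assertion that ``$\Aut(H)$ acts transitively on the incidences of $H$'' is therefore unfounded, and nothing you have written excludes $\bar\alpha$ moving only two vertices of $H$; in that case $|\supp(\alpha)|$ is of order~$k$, which is far below $n/16$ once the diameter is large (recall $n$ can be of order $k^{d}$). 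Even granting transitivity would not by itself produce a minimal-degree bound---that is a separate and nontrivial step. Your K\H{o}v\'ari--S\'os--Tur\'an estimate controls $e_H(M_H)$ but not $|M_H|$, so it does not touch the actual difficulty. The closing remark about ``a very short list of admissible parameter tuples'' is likewise unjustified: the constraints $\mu=1$, $m=-2$, $k>4$ do not by themselves pin down finitely many graphs.

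A workable repair is to replace the transitivity claim by a combinatorial one: show that $H$ inherits strong regularity from $G$ (in this situation $H$ is itself distance-regular of girth $\geq 5$, so in particular $\mu_H=1$), and then bound $\motion(H)$ by the same kind of distinguishing-number or spectral arguments used elsewhere, transferring back to $G$ via your support inequality. Alternatively, one can work spectrally on $G$ directly through Proposition~\ref{zerow}, using that the eigenvalues of $L(H)$ are $\eta_i + k/2 - 1$ for $\eta_i$ an eigenvalue of $H$, together with~$-2$. Either route requires an actual argument in place of the appeal to transitivity.
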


If a distance-regular graph \( G \) with diameter \( d \geq 2 \) on \( n \) vertices is bipartite (and thus necessarily imprimitive),
then its distance-2 graph \( G_2 \) has two components, and the graphs induced on these components by \( G_2 \) are called its \emph{halved} graphs.
The halved graphs are distance-regular with diameter \( \lfloor d/2 \rfloor \) on \( n/2 \) vertices~\cite[Proposition~4.2.2]{brouwer}.

\begin{proposition}[{\cite[Proposition~5.7]{kivvaGap}}]\label{bip}
	Let \( G \) be a bipartite distance-regular graph of diameter \( d \geq 3 \). Let \( G^+ \) and \( G^- \) be
	the halved graphs of \( G \). Then \( \motion(G) \geq \motion(G^+) + \motion(G^-) \).
\end{proposition}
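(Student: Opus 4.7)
Let $H = \Aut(G)$, write the bipartition as $\{V^+, V^-\}$, and note that $|V^+| = |V^-| = n/2$ because $G$ is regular. Let $H_0 \le H$ be the (index $\le 2$) subgroup preserving the bipartition. For any $g \in H_0$, the distance-preserving property of $g$ implies $g|_{V^+} \in \Aut(G^+)$ and $g|_{V^-} \in \Aut(G^-)$, since edges of $G^{\pm}$ are precisely distance-$2$ pairs of $G$ inside $V^{\pm}$. I split into two cases according to whether $g \in H$ lies in $H_0$.

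If $g \in H \setminus H_0$, then $g$ interchanges $V^+$ and $V^-$, so every vertex is displaced. Hence $|\supp(g)| = n = |V^+| + |V^-| \ge \motion(G^+) + \motion(G^-)$, using the trivial bounds $\motion(G^{\pm}) \le |V^{\pm}| = n/2$.

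If $g \in H_0 \setminus \{1\}$, the crucial step is to show that \emph{both} restrictions $g|_{V^+}$ and $g|_{V^-}$ are nontrivial; granted this, $\supp(g)$ is the disjoint union $\supp(g|_{V^+}) \sqcup \supp(g|_{V^-})$, and each term is at least the corresponding motion. Suppose for contradiction that $g$ fixes $V^-$ pointwise but moves some $u \in V^+$ to $u^g \neq u$, with $u^g \in V^+$. For every neighbor $w$ of $u$ we have $w \in V^-$ and $u^g \sim w^g = w$, and symmetrically, so $u$ and $u^g$ share \emph{all} $k$ of their neighbors. Now $d := d(u, u^g)$ is a positive even integer. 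If $d = 2$, then the number of common neighbors is $c_2$, forcing $c_2 = k$; combined with the chain $c_2 \le c_3 \le \dots \le c_d \le k$ and $a_j + b_j + c_j = k$, this yields $b_j = 0$ for all $j \ge 2$, contradicting $b_{d-1} \ge 1$ since $d \ge 3$. If $d \ge 4$, then bipartiteness forces any neighbor of $u^g$ to lie at distance $\ge d - 1 \ge 3$ from $u$, so $u$ and $u^g$ have no common neighbors, contradicting $k \ge 1$. Hence both restrictions of $g$ are nontrivial.

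Combining the two cases and minimizing over nontrivial $g \in H$ gives $\motion(G) \ge \motion(G^+) + \motion(G^-)$. The main obstacle is the last contradiction argument: one must use the hypothesis $d \ge 3$ in an essential way, because for $d = 2$ the statement fails (e.g.\ $K_{k,k}$ has many automorphisms fixing one part pointwise). The diameter-$3$ hypothesis enters precisely through the inequality $b_2 \ge b_{d-1} \ge 1$, which is what rules out the pathological ``twin'' configuration.
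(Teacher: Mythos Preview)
The paper does not supply its own proof of this proposition; it simply quotes the result from Kivva~\cite{kivvaGap}. Your argument is correct and is the natural one: split according to whether the automorphism swaps or preserves the two parts, and in the latter case show that fixing one part pointwise forces fixing the other, using that two vertices of $V^+$ with identical neighborhoods would yield $c_2=k$ (hence $b_2=0$, impossible for $d\ge 3$) or a common neighbor at distance $\ge 3$. One small remark: in the $d(u,u^g)=2$ step your deduction ``$c_2=k$ and $a_j+b_j+c_j=k$ give $b_j=0$ for $j\ge 2$'' is fine as written since $a_j,b_j\ge 0$, but it is cleaner to note directly that $b_2=0$ already forces $\diam(G)\le 2$.
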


If the distance-\( d \) graph \( G_d \) of a distance-regular graph \( G \) of diameter \( d \geq 2 \) is a union of cliques,
then \( G \) is called an \emph{antipodal} graph. A \emph{folded} graph of \( G \) is a graph whose vertices are the cliques of \( G_d \)
and two vertices are adjacent in the folded graph if the corresponding cliques have vertices adjacent in~\( G \). The folded graph
is distance-regular of diameter \( \lfloor d/2 \rfloor \)~\cite[Proposition~4.2.2]{brouwer}.

\begin{proposition}[{\cite[Proposition~5.3]{kivvaGap}}]\label{anp}
	Let \( G \) be an antipodal distance-regular graph of diameter \( d \geq 3 \) on~\( n \) vertices.
	Let \( \widetilde{G} \) be its folded graph on \( \widetilde{n} \) vertices. If \( \motion(\widetilde{G}) \geq \alpha \widetilde{n} \)
	for some \( \alpha > 0 \), then \( \motion(G) \geq \alpha n \).
\end{proposition}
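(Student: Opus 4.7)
The plan is to analyze a nonidentity automorphism $g \in \Aut(G)$ by its action on antipodal classes and split into two cases according to whether the induced map on the folded graph is trivial. Since $G$ is antipodal, the fibers (antipodal classes) partition $V(G)$ into $\widetilde{n}$ blocks of common size $r = n/\widetilde{n}$. Any automorphism of $G$ permutes fibers and therefore descends to an automorphism $\bar{g}$ of $\widetilde{G}$, yielding a natural homomorphism $\Aut(G) \to \Aut(\widetilde{G})$. I may assume $\alpha \leq 1$ since the motion of $\widetilde{G}$ cannot exceed $\widetilde{n}$.

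First I would handle the easy case $\bar{g} \neq 1$. By hypothesis $\bar{g}$ moves at least $\alpha \widetilde{n}$ fibers, and whenever $g$ maps a fiber $F$ to a different fiber $F'$, every vertex of $F$ lies in $\supp(g)$. Summing over the moved fibers gives
\[ |\supp(g)| \geq r \cdot \alpha \widetilde{n} = \alpha n, \]
as required.

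The main step is the complementary case $\bar{g} = 1$, in which $g$ stabilizes every fiber setwise. Here I claim $g$ is fixed-point-free, so that $|\supp(g)| = n \geq \alpha n$. The key observation is that in an antipodal graph of diameter $d \geq 3$, no vertex can be adjacent to two distinct antipodal points: if $v \in \Gamma_1(w) \cap \Gamma_1(w')$ for antipodes $w \neq w'$, then the triangle inequality gives $d(w, w') \leq 2$, contradicting $d(w,w') = d \geq 3$. Consequently, if $g$ fixes some vertex $v$, then for every neighbor $w$ of $v$ the image $g(w)$ must simultaneously lie in the fiber of $w$ and in $\Gamma_1(v)$; uniqueness forces $g(w) = w$. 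Hence neighbors of fixed points are fixed, and by connectedness of $G$ the set of fixed points equals all of $V$, contradicting $g \neq 1$.

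The only delicate point is the ``no two antipodal neighbors'' lemma driving Case~2; the rest is bookkeeping of support contributions fiber by fiber. Combining both cases, every nonidentity $g \in \Aut(G)$ satisfies $|\supp(g)| \geq \alpha n$, yielding $\motion(G) \geq \alpha n$.
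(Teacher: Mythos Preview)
Your argument is correct. The paper does not supply its own proof of this proposition; it is quoted verbatim from \cite[Proposition~5.3]{kivvaGap} and used as a black box, so there is nothing in the present paper to compare against beyond noting that your two-case split (nontrivial induced action on $\widetilde{G}$ versus fiber-preserving automorphism) is exactly the natural route and matches the standard proof. The only substantive step is the one you flag: in an antipodal distance-regular graph of diameter $d\geq 3$ each fiber meets any neighborhood $\Gamma_1(v)$ in at most one vertex, which forces any fiber-stabilizing nonidentity automorphism to be fixed-point-free.
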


It is known that an imprimitive distance-regular graph of valency greater than two is bipartite or antipodal (or both)~\cite[Theorem~4.2.1]{brouwer},
but we will require a more detailed description.

\begin{proposition}[{\cite[Proposition~2.14]{kivvaGap}, see also~\cite[Section~4.2.A]{brouwer}}]\label{impclass}
	Let \( G \) be a distance-regular graph of valency greater than two. Then:
	\begin{enumerate}
		\item If \( G \) is a bipartite graph, then its halved graphs are not bipartite;
		\item If \( G \) is bipartite and either has odd diameter or is not antipodal, then its halved graph is primitive;
		\item If \( G \) is antipodal and either has odd diameter or is not bipartite, then its folded graph is primitive;
		\item If \( G \) has even diameter and is both antipodal and bipartite, then its halved graphs \( \frac{1}{2}G \)
			are antipodal, the folded graph \( \widetilde{G} \) is bipartite and the graphs \( \widetilde{\frac{1}{2}G} \simeq \frac{1}{2}\widetilde{G} \) are primitive.
	\end{enumerate}
\end{proposition}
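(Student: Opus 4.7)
The plan is to use as starting point the dichotomy stated just before the proposition: an imprimitive distance-regular graph of valency greater than two is bipartite, antipodal, or both. Deciding primitivity of a derived graph $H$ (a halved $\frac{1}{2}G$ or folded $\widetilde{G}$) therefore reduces to ruling out these two specific obstructions. The strategy is first to prove (1) outright, and then to combine (1) with intersection-array bookkeeping to obtain (2), (3), and (4).

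For part (1), I would argue directly. Assume $G$ is bipartite with parts $A$, $B$ and of diameter $d \geq 2$, and fix two vertices $x, y \in A$ at $G$-distance $2$ sharing a common neighbor $v \in B$. Since $k > 2$, the vertex $v$ has at least three neighbors in $A$; pick a third neighbor $z \in A$ distinct from $x, y$. All three pairs lie in the same part, so their pairwise distances in $G$ are even, and all three pairs share $v$ as a common neighbor, forcing $d_G(x,y) = d_G(x,z) = d_G(y,z) = 2$. Thus $\{x, y, z\}$ is a triangle in $\frac{1}{2}G$, which rules out bipartiteness. For parts (2) and (3), if $\frac{1}{2}G$ is imprimitive then by (1) it is antipodal; I would translate this via the relation ``$H$-distance $\lfloor d/2 \rfloor$'' $=$ ``$G$-distance $2\lfloor d/2\rfloor$'' and check that the antipodal equivalence property in $\frac{1}{2}G$ forces $d$ to be even and lifts to an antipodal equivalence on $G$, giving the contrapositive of (2). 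Dually, if $G$ is antipodal and $\widetilde G$ is imprimitive, I would first exclude $\widetilde G$ being antipodal (the folding already collapsed the maximal antipodal classes of $G$, and a further antipodal structure on $\widetilde G$ would contradict maximality via the $k_d$-count), leaving $\widetilde G$ bipartite; a bipartition of $\widetilde G$ pulled back through the antipodal quotient yields a bipartition of $G$ with $d$ necessarily even.

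For part (4), assuming $G$ has even diameter $d = 2e$ and is both bipartite and antipodal, I would transport each property through the opposite operation. Antipodality of $G$ descends to $\frac{1}{2}G$: two $A$-vertices at $G$-distance $d = 2e$ become vertices at $H$-distance $e = \operatorname{diam}(\frac{1}{2}G)$, and the $G$-antipodal equivalence restricted to $A$ gives the antipodal equivalence on $\frac{1}{2}G$. Dually, the bipartition of $G$ respects antipodal classes (each class is monochromatic because $d$ is even, so antipodes lie in the same part), hence descends to a bipartition of $\widetilde{G}$. The isomorphism $\widetilde{\frac{1}{2}G} \simeq \frac{1}{2}\widetilde{G}$ follows by observing that both constructions are quotients/subgraphs by commuting equivalence relations (the parts and the antipodal classes) on $V$, so the order of operations is immaterial. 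Primitivity of the twice-reduced graph then follows from iterated application of (2) and (3) once we verify that the remaining intersection array has no further bipartite or antipodal collapse; I would check this by observing that one halving removes bipartiteness and one folding removes antipodality, leaving nothing for Smith's dichotomy to latch onto. The main obstacle throughout is the intersection-array bookkeeping needed to certify that antipodality or bipartiteness really transfers as claimed between $G$ and its derived graphs, especially in (3) where the exclusion of $\widetilde{G}$ being antipodal is the most delicate combinatorial step.
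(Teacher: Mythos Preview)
The paper does not prove this proposition at all: it is stated with a citation to \cite[Proposition~2.14]{kivvaGap} and \cite[Section~4.2.A]{brouwer} and then used as a black box, so there is no in-paper argument against which to compare your sketch.

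Taken on its own merits, your outline follows the standard route from Brouwer--Cohen--Neumaier: invoke Smith's dichotomy, settle (1) by the triangle argument (which is correct as you wrote it), and handle (2)--(4) by transporting bipartite/antipodal structure through halving and folding. One point deserves more care. In (3) you propose to exclude the possibility that \( \widetilde{G} \) is itself antipodal by appealing to ``maximality via the \( k_d \)-count''; this is not how the argument actually runs, and in fact the correct conclusion is not that \( \widetilde{G} \) cannot be antipodal outright. Rather, the analysis in \cite[Section~4.2.A]{brouwer} proceeds by computing the intersection array of \( \widetilde{G} \) explicitly in terms of that of \( G \) and then checking directly which of the bipartite/antipodal conditions can hold; the outcome is that any residual imprimitivity of \( \widetilde{G} \) forces \( G \) to be bipartite of even diameter, whence case~(4) takes over. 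Your final paragraph already concedes that this step is ``the most delicate combinatorial step'' and that the intersection-array bookkeeping is ``the main obstacle throughout'', which is accurate; but the specific mechanism you name for that step is not the right one.
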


\subsection{Characterizations of Johnson and Hamming graphs}\label{pJH}

For \( d \geq 2 \) and \( s \geq 2d \) the \emph{Johnson graph} \( J(s, d) \)
is defined as the graph having all \( d \)-element subsets of \( \{ 1, \dots, s \} \) as a vertex set,
where two vertices are adjacent if and only if the corresponding subsets differ in exactly one element.
This graph is distance-regular of diameter \( d \) and valency \( d(s-d) \).

Given a vertex \( v \) of the graph \( G \), let \( G(v) \) denote the neighborhood graph, i.e.\ the graph
induced by \( G \) on the neighbors of \( v \).

\begin{proposition}[{\cite[Theorem~1.2]{kivvaJH}}]\label{johnson}
	There exists an absolute constant \( \epsilon > 0.0065 \) such that the following is true.
	Let \( G \) be a Delsarte-geometric distance-regular graph of diameter \( d \geq 2 \)
	with the smallest eigenvalue \( m \). Suppose that \( \mu \geq 2 \) and \( \theta + 1 > (1 - \epsilon)b_1 \).
	If the valency satisfies \( k \geq \max \{ |m|^3,\, 29 \} \) and
	the neighborhood graph \( G(v) \) is connected for some vertex \( v \), then \( G \) is a Johnson graph \( J(s, d) \)
	with \( s = (k/d) + d \).
\end{proposition}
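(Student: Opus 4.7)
The plan is to match the local and global structure of $G$ against that of $J(s,d)$. In $J(s,d)$ one has $m = -d$, the Delsarte cliques are the ``stars'' of all $d$-subsets containing a fixed $(d-1)$-subset (hence of size $s-d+1$, giving valency $k = d(s-d)$), the local graph at any vertex is the rook graph $K_d \,\square\, K_{s-d}$, and a direct computation yields $\theta + 1 = b_1 = (d-1)(s-d-1)$. So the hypotheses fit this model exactly, and the aim is to recover it.

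First I would analyse $G(v)$ combinatorially. By the Delsarte-geometric assumption and Lemma~\ref{mint}, through $v$ there pass exactly $-m$ Delsarte cliques of size $1 - k/m$; every edge at $v$ lies in exactly one of them, so they pairwise meet only at $v$, and their restrictions to $G(v)$ partition $V(G(v))$ into $-m$ cliques of size $-k/m$, which I shall call the \emph{rows}. The condition $\mu \geq 2$ combined with the connectedness of $G(v)$ then forces further edges between distinct rows (otherwise $G(v)$ is a disjoint union of cliques, hence disconnected), and the natural guess is $G(v) \simeq K_{-m} \,\square\, K_{-k/m}$.

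Second I would use the spectral hypothesis $\theta + 1 > (1-\epsilon) b_1$ to pin down this guess. Eigenvalue interlacing between $G$ and $G(v)$, together with Hoffman-style bounds using the minimum eigenvalue~$m$, should simultaneously control the second eigenvalue of $G(v)$ and the number of cross-row edges. Combined with $k \geq |m|^3$ this will force every pair of vertices lying in distinct rows to have exactly one common neighbour inside $G(v)$, which characterises the rook graph $K_a \,\square\, K_b$ for $a, b \geq 3$ up to the small Shrikhande-type exception, ruled out by $k \geq 29$.

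Third, once $G(v) \simeq K_{-m} \,\square\, K_{-k/m}$ is identified, I would read off $-m = d$. Iterating along a geodesic $v = v_0, v_1, \dots, v_d$, each step changes one of the $-m$ row coordinates transported along the path, so a geodesic of length $d$ requires at least $d$ rows, giving $d \leq -m$; the converse inequality is forced by the Delsarte bound together with the small spectral gap. Setting $s = k/d + d$ then matches the valency $k = d(s-d)$ of $J(s,d)$, and the global conclusion $G \simeq J(s,d)$ follows from a classical ``locally a rook graph'' characterisation of Johnson graphs (going back to Chang and refined by Neumaier and Terwilliger), verifying the intersection numbers $c_j = j^2$ inductively from the local picture. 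The main obstacle, and the step that pins down the explicit constant $\epsilon > 0.0065$, is the quantitative translation in step~two of the spectral near-equality into combinatorial rigidity of $G(v)$; everything else is robust.
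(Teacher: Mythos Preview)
The paper does not prove this proposition at all: it is quoted verbatim as \cite[Theorem~1.2]{kivvaJH} and used as a black box in the proof of Theorem~\ref{main}. There is therefore nothing in the present paper to compare your proposal against; any genuine comparison would have to be made with Kivva's original argument in~\cite{kivvaJH}, not with this paper.

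As a self-contained sketch your outline captures the right architecture (Delsarte cliques partition the neighbourhood, spectral near-equality forces the local graph to be a grid, then a local-to-global characterisation yields the Johnson graph), but several steps are only gestured at rather than proved. In particular: the passage from $\theta+1>(1-\epsilon)b_1$ to the exact rook-graph structure of $G(v)$ via interlacing is the heart of the matter and you have not indicated which interlacing inequality you mean or how the constant $0.0065$ emerges; your argument for $-m\leq d$ (``the converse inequality is forced by the Delsarte bound together with the small spectral gap'') is not an argument; and the final appeal to a ``classical locally-a-rook-graph characterisation'' is too loose, since there exist distance-regular graphs that are locally grid but are not Johnson graphs, so one needs to specify precisely which recognition theorem applies under the present hypotheses. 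None of this is fatal to the plan, but as written it is a plausible table of contents rather than a proof.
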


In a Delsarte-geometric distance-regular graph the neighborhood graphs are either all connected or all disconnected~\cite[Theorem~5.3]{bangGrs}.

\begin{proposition}[{\cite[Proposition~3.11]{kivvaJH}}]\label{ndis}
	Let \( G \) be a Delsarte-geometric distance-regular graph of diameter greater than two and with \( \mu \geq 3 \). 
	If the neighborhood graphs of \( G \) are disconnected, then \( \theta + 1 \leq 5b_1 / 7 \).
\end{proposition}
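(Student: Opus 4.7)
The plan is to carry out a two-stage analysis: first, extract from the disconnected-neighborhoods hypothesis the strong local structural equality $G(v) = (-m) \cdot K_{s-1}$; second, convert the constraint $\mu \geq 3$ into the desired spectral gap for $\theta$.

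For the first stage, fix a vertex $v$. By Lemma~\ref{mint}, $\Gamma_1(v)$ is partitioned into $-m$ Delsarte cliques of size $s-1$, where $s = 1 - k/m$, and in particular $a_1 \geq s - 2$. The local graph $G(v)$ is $a_1$-regular on $k = (s-1)(-m)$ vertices, and since it is disconnected, each connected component is itself $a_1$-regular; hence the eigenvalue $a_1$ of $G(v)$ has multiplicity at least two, and there exists an $a_1$-eigenvector orthogonal to the all-ones vector. Applying Terwilliger's classical local-eigenvalue inequality to this eigenvector yields $a_1 + 1 \leq b_1/(|m|-1)$. Combined with $a_1 \geq s-2$ and $k = (s-1)|m|$, a short rearrangement forces $a_1 = s-2$, hence $b_1 = (s-1)(|m|-1)$; in particular, $G(v) = (-m) \cdot K_{s-1}$ has no cross-clique edges.

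For the second stage, exploit $\mu \geq 3$. Take $u \in C_i \setminus \{v\}$ and $w \in C_j \setminus \{v\}$ in distinct Delsarte cliques through $v$. Since $G(v)$ has no cross-clique edges, $u \not\sim w$, so $d(u, w) = 2$ and $u, w$ share at least three common neighbors in $G$. Any such common neighbor at distance $1$ from $v$ would produce a cross-clique edge in $G(v)$, contradicting the local structure; so the only common neighbor in $\Gamma_0(v) \cup \Gamma_1(v)$ is $v$ itself, and the remaining at least two common neighbors must lie in $\Gamma_2(v)$. Summing over all cross-clique pairs in $\Gamma_1(v)$ produces a lower bound on the count of length-two paths routed through $\Gamma_2(v)$, hence lower bounds on certain intersection numbers in the second subconstituent, notably on $c_2 \cdot k_2$ relative to $k_1 \cdot b_1$.

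The final step converts these constraints into the bound $\theta + 1 \leq 5 b_1/7$. A natural route is to bound $\theta$ from above via a Rayleigh-quotient analysis on the symmetrized tridiagonal intersection matrix $\widetilde L$, using the known parameters $a_1 = s-2$, $b_1 = (s-1)(|m|-1)$, and $c_2 \geq 3$ together with the constraints from stage two. The main obstacle lies in extracting the specific constant $5/7$: this requires a delicate eigenvalue computation or a carefully chosen test vector, with the ratio $5/7$ presumably arising from optimizing a Rayleigh quotient over a specific family of test vectors that is sensitive to the gap $c_2 - 2 \geq 1$ between the hypothesis $\mu \geq 3$ and the Hamming-like extremal case $\mu = 2$ where equality $\theta + 1 = b_1$ is attained.
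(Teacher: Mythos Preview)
The paper does not prove this proposition; it is quoted verbatim from Kivva's paper~\cite{kivvaJH} as an external input, so there is no proof here to compare against. I can only assess your proposal on its own merits.

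Your first stage is sound. The Terwilliger local-eigenvalue bound does give $a_1 + 1 \leq b_1/(|m|-1)$ once $a_1$ occurs as a non-trivial eigenvalue of $G(v)$, and together with $a_1 \geq s-2$ and $k = (s-1)|m|$ this forces $a_1 = s-2$ exactly, so $G(v) = (-m)\cdot K_{s-1}$ with no cross-clique edges and $b_1 = (s-1)(|m|-1)$.

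The second stage, however, is not a proof but a programme, and you say so yourself (``the main obstacle lies in extracting the specific constant $5/7$''). The observation that two vertices in distinct cliques through $v$ share at least $\mu - 1 \geq 2$ common neighbours in $\Gamma_2(v)$ is correct and is indeed the kind of structural input one needs, but you have not shown how it feeds into an eigenvalue inequality. The phrase ``Rayleigh-quotient analysis on the symmetrized tridiagonal intersection matrix with a carefully chosen test vector'' is a description of a method, not an argument: you have neither produced the test vector nor carried out the computation, and you explicitly leave the origin of $5/7$ unexplained. Your remark that Hamming graphs with $\mu=2$ attain $\theta+1=b_1$ correctly identifies why the hypothesis $\mu\geq 3$ is essential, but this is motivation, not proof. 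As it stands the proposal establishes the structural lemma $G(v)=(-m)\cdot K_{s-1}$ and then stops; the spectral conclusion $\theta+1\leq 5b_1/7$ remains unproved.
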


For \( d, s \geq 2 \) the \emph{Hamming graph} has \( \{ 1, \dots, s \}^d \) as the vertex set,
and two vertices are adjacent if and only if the corresponding \( d \)-tuples differ in one coordinate only.
This graph is distance-regular with diameter \( d \) and valency \( d(s-1) \).

\begin{proposition}[{\cite[Corollary~4.8]{kivvaJH}}]\label{hamming}
	Let \( G \) be a Delsarte-geometric distance-regular graph of diameter \( d \geq 2 \).
	Suppose that \( G \) has \( \mu = 2 \) and the smallest eigenvalue \( m \). Take \( 0 < \epsilon < 1/(6m^4d) \).
	If \( \theta \geq (1 - \epsilon)b_1 \) and \( c_i \leq \epsilon k \), \( b_i \leq \epsilon k \) for some \( i \in \{ 1, \dots, d \} \),
	then \( G \) is a Hamming graph \( H(d, s) \) for \( s = (k/d) + 1 \).
\end{proposition}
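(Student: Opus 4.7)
The plan is to combine the Delsarte clique geometry with the restriction \( \mu = 2 \) to uncover a net-like clique structure, then use the spectral bound together with the local intersection-number bounds to show that there are exactly \( d \) parallel classes of cliques, and finally recognise \( G \) as a Hamming graph via a known characterisation of nets.

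I would first apply Lemma~\ref{mint} to obtain a clique geometry \( \mathcal{C} \) consisting of Delsarte cliques of size \( c := 1 - k/m \) with each vertex lying in exactly \( -m \) of them, so that \( k = (-m)(c-1) \). Next, \( \mu = 2 \) is used to extract local rigidity: the unique clique through any edge \( xy \) together with the two-common-neighbour condition should force \( a_1 = c - 2 \), whence \( b_1 = (-m - 1)(c - 1) \). For any two distinct cliques \( C, C' \in \mathcal{C} \) sharing a vertex \( v \), and any \( x \in C \setminus \{v\} \), \( y \in C' \setminus \{v\} \), the unique second common neighbour of \( x,y \) guaranteed by \( \mu = 2 \) lies on cliques that pair \( C \) and \( C' \) with cliques through the opposite corner of the resulting 4-cycle; propagating this pairing yields an equivalence relation on \( \mathcal{C} \) whose classes each partition \( V \) into \( n/c \) disjoint cliques --- the candidate parallel classes.

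The technical heart of the argument, and the main obstacle, is to show that this equivalence relation has exactly \( d \) classes, i.e.\ \( -m = d \). The spectral hypothesis \( \theta \geq (1 - \epsilon) b_1 \) confines \( \theta \) within \( \epsilon b_1 \) of \( (-m - 1)(c - 1) \), the value it would take in a Hamming graph. Combined with a level \( i \) at which \( c_i, b_i \leq \epsilon k \), one can set up a Hoffman/Delsarte-type eigenvalue comparison: each putative parallel class produces a quotient graph with a controlled spectrum, and the intersection number bound at level \( i \) provides a nearly-tight test eigenvector whose Rayleigh quotient can be pitted against the quotient spectra. The specific threshold \( \epsilon < 1/(6 m^4 d) \) is exactly what absorbs the error terms so that any value \( -m < d \) would force \( \theta \) to drop below \( (1 - \epsilon) b_1 \), a contradiction. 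Tracking the powers of \( m \) and \( d \) through this estimate is the delicate part, and is where the \( 1/(6 m^4 d) \) quantity is pinned down.

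Once \( -m = d \) is established, the pair \( (V, \mathcal{C}) \) is a resolvable \( d \)-net on \( n = c^d \) vertices whose incidence geometry is that of \( d \) coordinates each taking \( c \) values, with \( \mu = 2 \) matching the Hamming 4-cycle structure. An Egawa-type recognition theorem for Hamming graphs then identifies \( G \) as \( H(d, s) \) with \( s = c = (k/d) + 1 \), completing the proof.
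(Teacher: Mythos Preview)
The paper does not prove this proposition at all: it is simply quoted, without argument, as Corollary~4.8 of \cite{kivvaJH} in the preliminaries. There is therefore no in-paper proof to compare your proposal against; the proposition is used as a black box in the case analysis of Section~\ref{pgraphs}.

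As to your sketch on its own merits: the overall strategy --- extract a net-like parallel-class structure from the Delsarte clique geometry under \( \mu = 2 \), pin down the number of classes via the spectral hypothesis, and finish with an Egawa-type recognition --- is a reasonable shape for such a result and is in the spirit of Kivva's treatment. But two steps are not actually carried out. First, the assertion that \( \mu = 2 \) forces \( a_1 = c - 2 \) (equivalently, that adjacent vertices have no common neighbours outside their shared Delsarte clique) is not immediate from the clique geometry alone and needs an argument; without it the formula \( b_1 = (-m-1)(c-1) \) and the subsequent eigenvalue comparison do not get off the ground. Second, and more seriously, the ``technical heart'' --- deducing \( -m = d \) from \( \theta \geq (1-\epsilon)b_1 \) and the existence of a level \( i \) with \( b_i, c_i \leq \epsilon k \) --- is only described in outline (``Hoffman/Delsarte-type comparison'', ``nearly-tight test eigenvector'') with no concrete inequality written down. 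You assert that the threshold \( \epsilon < 1/(6m^4 d) \) is exactly what absorbs the error terms, but nothing in the sketch produces those error terms or the powers of \( m \) and \( d \). As written this is a plan, not a proof; to turn it into one you would need to either reconstruct Kivva's actual eigenvalue estimates or supply your own explicit inequalities at the indicated steps.
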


\section{Proof of Theorems~\ref{main} and~\ref{geom}}\label{pgraphs}

We will prove both theorems from the title simultaneously. Theorem~\ref{geom} will be an intermediate step in the proof of Theorem~\ref{main}.

Let \( G \) be a primitive distance-regular graph of valency \( k \) on \( n \) points, where \( n \) is large enough
(we may assume that by choosing the constant \( C \) appropriately).
Since cycle graphs have large motion, we may assume that \( k \geq 3 \). Let \( d \geq 3 \) denote the diameter of \( G \). 
We first consider the case when \( G \) is primitive.

Assume that \( c_2 > k/(20 d^4) \). Then
\[ 20 d^4 > \frac{k}{c_2} = \frac{b_0}{c_2} > \frac{b_1}{c_2} = \frac{k_2}{k_1} \geq \dots \geq \frac{k_{d}}{k_{d-1}}. \]
If \( \X \) is a coherent configuration associated with \( G \), then the valencies of its nondiagonal relations are \( k_1, \dots, k_d \).
If \( k_{\max} = k_i \), \( i \in \{ 2, \dots, d \} \), is the maximal valency, then \( k_{i} < k_{i-1} \cdot 20d^4 \)
and hence \( k_{\max}/(20 d^4) < n - k_{\max} \). By Proposition~\ref{dmin}, we have \( D_{\min} \geq k_{\max}/(20 d^5) \).

If \( k_{\max} \geq n/2 \), then \( D_{\min} \geq n/(40 d^5) \) and we have a bound on motion by Proposition~\ref{motion}.
If \( k_{\max} < n/2 \), then \( D_{\min} \geq (n - k_{\max})/d > n/(2d) \) and we again have a similar bound.
From now one we assume that \( c_2 \leq k/(20 d^4) \). In particular, \( k \geq 20 d^4 \).

If \( \motion(G) > n/2 \) then we are done, so let \( g \) be a nonidentity automorphism of \( g \) with \( |\supp(g)| \leq n/2 \).
For \( S = \supp(g) \) Proposition~\ref{expand} gives \( \frac{|\delta_G(S)|}{|S|} \geq \frac{k}{2d} \),
so there exists a vertex \( x \in S \) with at least \( k/(2d) \) neighbors in~\( V \setminus S \). Vertices \( x \) and \( x^g \)
are distinct and have at least \( k / (2d) \) common neighbors. If \( x \) and \( x^g \) are not adjacent,
then \( c_2 \geq k / (2d) \) which contradicts our assumption. Therefore they are adjacent and thus \( \lambda = a_1 \geq k / (2d) \).

The graph \( G \) is sub-amply regular with \( \mu = c_2 \leq k/(20 d^4) \). Then \( k \mu \leq k^2 / (20 d^4) \) while \( \lambda^2 \geq k^2 / (4d^2) \).
Therefore
\[ \lambda^2 \geq \frac{k^2}{4 d^2} \geq \frac{4k^2}{20d^4} \geq 4k\mu \]
and by Proposition~\ref{clique} the graph \( G \) contains a clique of size at least \( \lambda/2 \).
By the Delsarte bound, \( \lambda/2 \leq 1 + k / (-m) \), thus
\[ -m \leq \frac{k}{\lambda/2 - 1} \leq \frac{k}{\frac{k}{4d} - 1} < 5d. \]
Then \( m^2 \mu \leq 25d^2 \frac{k}{20d^4} < \frac{k}{2d} \leq \lambda \),
and by Proposition~\ref{bang} the graph \( G \) is Delsarte-geometric. Note that \( \mu < \lambda \).

At this point of the proof we have established Theorem~\ref{geom}. Now we proceed with the proof of Theorem~\ref{main}.
We roughly follow the argument in~\cite[Theorem~5.16]{kivvaJH}.

Recall that \( 1 \leq \mu \leq k/(20d^4) \), so \( 20d^4 \leq k \) and \( d \) is bounded in terms of~\( k \).
For a graph of diameter \( d \) and valency \( k \) we have \( n \leq 1 + k^d \), hence \( n \) is bounded in terms of~\( k \).
In particular, by choosing \( n \) large enough, we may assume that \( k \) is larger than any fixed absolute constant
(we note that this holds in general for any distance-regular graph by the positive solution of the Bannai-Ito conjecture in~\cite{bangFixVal},
but we need not use this deep result in our argument).

We use the following positive parameter throughout the proof:
\[ \epsilon = \frac{1}{6 \cdot (5d)^4 \cdot d}. \]
By Proposition~\ref{bandc}, if \( b_j \geq \epsilon k \) and \( c_{j+1} \geq \epsilon k \) for some \( j \in \{ 1, \dots, d-1 \} \), then
\[ \motion(G) \geq \frac{\epsilon n}{d} \geq C \cdot \frac{n}{d^6} \]
for some universal constant \( C > 0 \).
If for all \( j \) the inequalities for \( b_j \) and \( c_{j+1} \) do not hold, then since \( b_j \) are nonincreasing and
\( c_j \) are nondecreasing, we can find some \( j \in \{ 1, \dots, d \} \) with \( b_j \leq \epsilon k \) and \( c_j \leq \epsilon k \).

Now we consider several cases.
\medskip

\textbf{Case \( \theta < (1 - \epsilon)b_1 \).} The zero-weight spectral radius of \( G \)
is equal to \( \xi = \max \{ \theta,\, -m \} \), consequently, \( \xi \leq \max \{ (1 - \epsilon)b_1,\, -m \} \).
By Lemma~\ref{mulambda}, \( 2\lambda \leq k + \mu \) hence
\[ b_1 = k - 1 - \lambda \geq \frac{k - \mu - 2}{2}. \]
As \( \mu \leq k/(20d^4) \), we have \( 2\mu + 4 < k \), therefore
\[ b_1 \geq \frac{k - \mu - 2}{2} > \frac{k}{4}, \]
and \( (1 - \epsilon)b_1 > (1 - \epsilon)k/4 \).
Since \( -m < 5d < k/4(1 - \epsilon) \), as \( k \geq 20d^4 \), we obtain \( \xi \leq (1 - \epsilon)b_1 \).

Every pair of vertices of \( G \) has at most \( \max \{ \lambda,\, \mu \} \) common neighbors. Since this maximum is equal to \( \lambda \),
Proposition~\ref{zerow} implies
\[ \motion(G) \geq n \frac{k - \xi - \lambda}{k} \geq n \frac{k - (1 - \epsilon)b_1 - \lambda}{k}. \]
Now \( (1 - \epsilon)b_1 + \lambda \leq k - \epsilon b_1 \leq (1 - \epsilon/4)k \), thus \( \motion(G) \geq n \epsilon/4 \geq C \cdot n/d^5 \)
for some universal constant \( C > 0 \).
\medskip

\textbf{Case \( \theta \geq (1 - \epsilon)b_1 \) and \( \mu \geq 3 \).}
We claim that for \( n \) sufficiently large we have \( \max \{ |m|^3,\, 29 \} \leq k \).
Since \( |m|^3 < (5d)^3 \) and \( k \geq 20d^4 \), this holds for \( d \geq 7 \). If \( d < 7 \),
then this holds as \( k > (5 \cdot 6)^3 \) for \( n \) large enough.

Suppose that the neighborhood graphs of \( G \) are disconnected, then by Proposition~\ref{ndis},
we have \( \theta + 1 \leq 5b_1/7 \). Then \( 5b_1/7 > (1 - \epsilon)b_1 \) implying \( \epsilon > 2/7 \), which is a contradiction.
We may assume that \( \epsilon < 0.0065 \), therefore the neighborhood graphs of \( G \) are connected and Proposition~\ref{johnson} shows that \( G \) is a Johnson graph.
\medskip

\textbf{Case \( \theta \geq (1 - \epsilon)b_1 \) and \( \mu = 2 \).}
Since \( b_j \leq \epsilon k \) and \( c_j \leq \epsilon k \) for some \( j \in \{ 1, \dots, d-1 \} \),
and \( \epsilon < 1/(6m^4d) \), Proposition~\ref{hamming} implies that \( G \) is a Hamming graph.
\medskip

\textbf{Case \( \mu = 1 \).} Assume that \( -m \geq 3 \). By Proposition~\ref{eigenexp}, \( \theta \leq k (1 - 1/(8d^2)) \).
As \( -m \leq 5d \) and \( k \geq 20d^4 \), we have a bound on the zero-weight spectral radius:
\[ \xi = \max \{ \theta,\, -m \} \leq k \left(1 - \frac{1}{8d^2} \right). \]
Set \( \eta = 1/(8d^2) \), so \( \xi \leq k(1 - \eta) \). Clearly \( k > m^2 \) and \( k > 4m/\eta = 32(-m)d^2 \).
Proposition~\ref{dual} implies \( \motion(G) \geq n\eta/4 = n/(32 d^2) \) as wanted. 

If \( -m < 3 \), then by Lemma~\ref{mint}, \( m \) is an integer and every vertex lies in precisely \( -m \) cliques from our geometry, hence \( m = -2 \).
Then by Proposition~\ref{m2} we have \( \motion(G) \geq n/16 \). This finishes the proof of the primitive case of Theorem~\ref{main},
namely, we established the following.

\begin{proposition}\label{mainprim}
	Let \( G \) be a primitive distance-regular graph on \( n \) points of diameter~\( d \geq 3 \).
	Then either \( G \) is a Johnson or a Hamming graph, or
	\[ \motion(G) \geq \gamma_d n \]
	where \( \gamma_d = C/d^6 \) for some universal constant \( C > 0 \).
\end{proposition}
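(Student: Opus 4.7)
The plan is to run a dichotomy on the size of $c_2$ relative to $k$, with threshold $k/(20 d^4)$. In the easy regime $c_2 > k/(20 d^4)$, the chain $k/c_2 > b_1/c_2 = k_2/k_1 \geq \dots \geq k_d/k_{d-1}$ recalled in Subsection~\ref{pDrg} forces every ratio $k_i/k_{i-1}$ to be at most $20 d^4$. Consequently the maximal valency $k_{\max}$ among the relations $X_i$ of the associated coherent configuration satisfies $k_{\max}/(20 d^4) < n - k_{\max}$; combined with Proposition~\ref{dmin} and Proposition~\ref{motion}, and splitting on whether $k_{\max} \geq n/2$, this yields a motion lower bound of order $n/d^5$ with no structural assumption on $G$.

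The hard regime is $c_2 \leq k/(20 d^4)$, which in particular forces $k \geq 20 d^4$. Here I would argue that if $G$ has a nonidentity automorphism $g$ with $|\supp(g)| \leq n/2$, then applying the coherent-configuration edge-expansion bound (Proposition~\ref{expand}) to $S = \supp(g)$ produces a vertex $x \in S$ with at least $k/(2d)$ neighbors outside $S$. Each such neighbor is common to $x$ and $x^g$, so unless $c_2 \geq k/(2d)$ (ruled out by the regime), $x$ and $x^g$ are adjacent and hence $\lambda \geq k/(2d)$. Then $\lambda^2 \geq k^2/(4 d^2) \geq 4 k \mu$, so Proposition~\ref{clique} supplies a clique of size at least $\lambda/2$; the Delsarte bound forces $-m < 5 d$, so $m^2 \mu < \lambda$, putting us in the hypothesis of Bang--Koolen (Proposition~\ref{bang}). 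Thus $G$ is Delsarte-geometric with $-m < 5 d$, which is precisely Theorem~\ref{geom}.

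To finish, I would fix $\epsilon = 1/(6 \cdot (5d)^4 \cdot d)$ and split on $\mu$ and on the size of $\theta$ relative to $b_1$. By Proposition~\ref{bandc}, if for some $j$ both $b_j \geq \epsilon k$ and $c_{j+1} \geq \epsilon k$ then $\motion(G) \geq \epsilon n / d$, which is of order $n/d^6$; otherwise monotonicity of $b_j$ and $c_j$ produces an index $j$ with both $b_j \leq \epsilon k$ and $c_j \leq \epsilon k$. If $\theta < (1-\epsilon) b_1$, then since $-m < 5 d$ is much smaller than $(1-\epsilon) b_1$ (using Lemma~\ref{mulambda} to see $b_1 > k/4$), the zero-weight spectral radius satisfies $\xi \leq (1-\epsilon) b_1$, and Proposition~\ref{zerow} applied with common-neighbor bound $q = \lambda$ gives motion of order $\epsilon n$. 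If $\theta \geq (1-\epsilon) b_1$, then for $\mu \geq 3$ I would invoke Proposition~\ref{ndis} to exclude the disconnected-neighborhood case (which would force $\epsilon \geq 2/7$) and then Proposition~\ref{johnson} to identify $G$ as a Johnson graph; for $\mu = 2$ the Hamming characterization (Proposition~\ref{hamming}) applies since $\epsilon < 1/(6 m^4 d)$. The final sub-case $\mu = 1$ splits on $m$: when $-m \geq 3$, Proposition~\ref{eigenexp} combined with Proposition~\ref{dual} gives $\motion(G) \geq n/(32 d^2)$; when $-m = 2$, Proposition~\ref{m2} yields $\motion(G) \geq n/16$.

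The main obstacle is the step producing $\lambda \geq k/(2d)$ from edge expansion: this is where most of the structural information about $G$ originates, and it is what allows the soft spectral/expansion estimate of Proposition~\ref{expand} to be converted into the rigid geometric conclusion (Delsarte-geometric with $-m < 5d$) needed to feed into Kivva's Johnson/Hamming characterizations. Once that step is in place, the rest is a careful case analysis in which the constants must be tracked uniformly so that every branch yields $\motion(G) = \Omega(n/d^6)$.
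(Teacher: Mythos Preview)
Your proposal is correct and follows the paper's argument essentially step for step: the same dichotomy on $c_2$ at threshold $k/(20d^4)$, the same edge-expansion trick to extract $\lambda \geq k/(2d)$ and reach Theorem~\ref{geom}, the same choice $\epsilon = 1/(6\cdot(5d)^4\cdot d)$, and the same case split on $\theta$ versus $(1-\epsilon)b_1$ and on $\mu \in \{1,2,\geq 3\}$ feeding into Propositions~\ref{bandc}, \ref{zerow}, \ref{ndis}/\ref{johnson}, \ref{hamming}, \ref{eigenexp}/\ref{dual}, and~\ref{m2}. The only details you elide are the verification $k \geq \max\{|m|^3,29\}$ before invoking Proposition~\ref{johnson} and the appeal to Lemma~\ref{mint} to pin down $m=-2$ when $-m<3$, both of which are routine.
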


We now assume that \( G \) is an imprimitive distance-regular graph. We need the following lemma, which is an improvement of~\cite[Theorem~5.8]{kivvaGap}
but with a better constant.
\begin{lemma}\label{lembip}
	Let \( G \) be a distance-regular bipartite graph of diameter \( d \geq 4 \) on \( n \) vertices.
	If a halved graph of \( G \) is primitive, then
	\[ \motion(G) \geq \gamma'_d n \]
	for \( \gamma'_d = \gamma_{\lfloor d/2 \rfloor}/2 \), where \( \gamma_d \) is a function from Proposition~\ref{mainprim}.
\end{lemma}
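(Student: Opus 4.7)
The plan is to combine the halved-graph motion inequality of Proposition~\ref{bip} with the primitive case already obtained in Proposition~\ref{mainprim}. First I would invoke Proposition~\ref{bip} to get $\motion(G) \geq \motion(G^+) + \motion(G^-)$, where $G^\pm$ denote the two halved graphs. By hypothesis one of them, say $G^+$, is primitive, and by Proposition~\ref{impclass}(1) no halved graph of a bipartite distance-regular graph is itself bipartite. Thus $G^+$ is a primitive, non-bipartite distance-regular graph on $n/2$ vertices of diameter $\lfloor d/2 \rfloor$.

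Since $d \geq 4$ yields $\lfloor d/2 \rfloor \geq 2$, and $d \geq 6$ yields $\lfloor d/2 \rfloor \geq 3$, Proposition~\ref{mainprim} applies directly to $G^+$ for $d \geq 6$ and gives
\[ \motion(G^+) \geq \gamma_{\lfloor d/2 \rfloor} \cdot \frac{n}{2} = \gamma'_d \, n, \]
as long as $G^+$ is not a Johnson or Hamming graph. Discarding the non-negative term $\motion(G^-)$ from the Proposition~\ref{bip} inequality then completes the argument. For the two boundary cases $d \in \{4, 5\}$ (where $G^+$ is strongly regular), I would substitute Proposition~\ref{babaiSRG}, extending the notation by setting $\gamma_2 = 1/8$ (valid once $n/2 \geq 29$, hence for $n$ large enough relative to the constant absorbed into $C$).

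The main obstacle I foresee is the exceptional case in which the primitive halved graph $G^+$ is itself a Johnson or Hamming graph (or, at diameter two, a disjoint union of cliques); there Proposition~\ref{mainprim} provides no usable motion bound. To push the lemma through uniformly I would try to rule this configuration out by a parameter-array argument: since $G$ is bipartite of diameter $d \geq 4$ we have $a_1 = 0$, and the intersection array of a halved graph is determined by that of $G$ via standard formulas, so a direct parameter check should show that no such bipartite double produces a Johnson or Hamming halved graph of the relevant diameter. Failing that, an ad hoc verification that the short explicit list of realising bipartite doubles still satisfies the desired motion bound should close the gap.
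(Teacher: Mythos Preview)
Your approach is exactly the paper's: invoke Proposition~\ref{bip} to pass to a halved graph, then apply Proposition~\ref{mainprim} to that primitive halved graph on $n/2$ vertices of diameter $\lfloor d/2\rfloor$. The paper's proof is in fact a single sentence doing just this, and it does \emph{not} pause to treat the boundary cases $d\in\{4,5\}$ or the Johnson/Hamming exception for the halved graph---so the gaps you anticipate are not handled there either; they are tacitly absorbed into the choice of the universal constant~$C$ (and, for diameter~2, into the convention that $\gamma_d=C/d^6$ extends formally, with Proposition~\ref{babaiSRG} supplying the actual bound). Your more cautious plan is thus strictly more careful than the paper's own argument.
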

\begin{proof}
	Let \( G' \) be a primitive halved graph of \( G \). By Proposition~\ref{bip}, \( \motion(G) \geq \motion(G') \),
	and since \( G' \) has diameter \( \lfloor d/2 \rfloor \) and \( n/2 \) vertices, the claim follows from Proposition~\ref{mainprim}.
\end{proof}

The rest of the argument closely follows~\cite[Theorem~5.16]{kivvaGap}, but uses better constants. We mention that ``Conjecture~1.5'' in~\cite{kivvaGap}
corresponds to our Proposition~\ref{mainprim}.

Recall that an imprimitive distance-regular graph is bipartite or antipodal, or both. If \( G \) is bipartite but not antipodal and \( d \geq 4 \),
then by Proposition~\ref{impclass}~(2) its halved graph is primitive and hence \( \motion(G) \geq \gamma'_d n \) by Lemma~\ref{lembip}.
If \( G \) is bipartite of diameter \( d = 3 \), then by~\cite[Theorem~5.12]{kivvaGap} the graph \( G \) is either a crown graph or \( \motion(G) \geq n/6 \).

If \( G \) is bipartite and antipodal of even diameter \( d \geq 6 \), then by Proposition~\ref{impclass}~(4) its folded graph \( \widetilde{G} \)
is bipartite (and not antipodal) of diameter~\( d/2 \). A crown graph is antipodal, hence the previous paragraph implies
\[ \motion(\widetilde{G}) \geq \min(\gamma'_d, 1/6) \widetilde{n}, \]
where \( \widetilde{n} \) is the number of vertices in~\( \widetilde{G} \). Then by Proposition~\ref{anp}
\[ \motion(G) \geq \min(\gamma'_d, 1/6) n. \]
If \( G \) is bipartite and antipodal of diameter \( d = 4 \), then by~\cite[Proposition~5.10]{kivvaGap}, \( \motion(G) \geq 0.15 n \).

We are left with cases when \( G \) is antipodal but not bipartite, and when \( G \) is antipodal of odd diameter. By Proposition~\ref{impclass}~(3),
the folded graph \( \widetilde{G} \) is primitive. If \( d = 3 \), then \( \motion(G) \geq n/13 \) by~\cite[Proposition~5.14]{kivvaGap}.
If \( d \geq 4 \) and \( \widetilde{G} \) is not a Johnson graph, a Hamming graph or a complement to \( J(s, 2) \) or \( H(2, s) \),
and \( \widetilde{G} \) has at least 29 vertices, then Proposition~\ref{mainprim} and Proposition~\ref{babaiSRG} imply
\[ \motion(\widetilde{G}) \geq \min(\gamma_{\lfloor d/2 \rfloor}, 1/8) \widetilde{n}, \]
where \( \widetilde{n} \) is the number of vertices in~\( \widetilde{G} \). Then by Proposition~\ref{anp}
\[ \motion(G) \geq \min(\gamma_{\lfloor d/2 \rfloor}, 1/8) n. \]

Finally, it is easy to see that if \( \widetilde{G} \) has at most \( 28 \) vertices, then \( \motion(G) \geq n/14 \),
and it follows from~\cite[Theorem~5.15]{kivvaGap} that a Johnson graph, a Hamming graph, and complements to \( J(s, 2) \) or \( H(2, s) \) have no antipodal covers
(taking into account our bound on the number of vertices).
Hence we proved that \( \motion(G) \geq \gamma n \) for
\[ \gamma = \min \left(\gamma_d, \gamma'_d, \gamma'_{\lfloor d/2 \rfloor}, \gamma_{\lfloor d/2 \rfloor}, \frac{1}{14} \right). \]
For an appropriate universal constant \( C > 0 \) we have \( \gamma \geq C/d^6 \), so the proof of Theorem~\ref{main} is finished.

\section{Proof of Theorem~\ref{maingroup}}\label{pgroups}

For a permutation group \( H \) on \( V \) and a subset \( \Sigma \subseteq V \) let \( H_\Sigma \) denote the pointwise stabilizer of \( \Sigma \).
Given a graph \( G \) with vertex set \( V \), we say that \( \Sigma \subseteq V \) \emph{splits} an edge \( e \) with respect to \( H \),
if the two endvertices of \( e \) lie in different \( H_\Sigma \)-orbits on~\( V \).

\begin{proposition}[{\cite[Splitting Lemma~3.1]{babai2Tr}}]\label{splitlem}
	Let \( G \) be a graph on \( V \) having \( q \) edges.
	Assume that a permutation group \( H \) acts edge-transitively on \( G \).
	If \( \Sigma \subseteq V \) splits at least \( cq \), \( 0 < c < 1 \), edges with respect to \( H \), then there exists
	\( \Delta \subseteq V \) which splits every edge of \( G \) with respect to \( H \) and \( |\Delta| \leq |\Sigma| \cdot \lceil \frac{-\log q}{\log (1-c)} \rceil \).
\end{proposition}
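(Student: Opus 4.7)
The plan is a standard greedy amplification argument built on two observations: a monotonicity property of the splitting operation and an averaging bound using edge-transitivity.

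First I would establish monotonicity. For any subset $X \subseteq V$, let $B(X)$ denote the set of edges split by $X$ with respect to $H$. If $\Sigma' \subseteq \Delta$, then $H_\Delta \leq H_{\Sigma'}$, so every $H_\Delta$-orbit is contained in a single $H_{\Sigma'}$-orbit. Consequently any edge whose endpoints lie in distinct $H_{\Sigma'}$-orbits also lies in distinct $H_\Delta$-orbits, giving $B(\Sigma') \subseteq B(\Delta)$. It therefore suffices to exhibit a union $\Delta = g_1(\Sigma) \cup \dots \cup g_k(\Sigma)$ of $H$-translates of $\Sigma$ such that every edge is split by at least one $g_i(\Sigma)$.

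Second, I would exploit edge-transitivity to bound the density of ``bad'' translates. For a fixed edge $e$, observe that $g(\Sigma)$ splits $e$ if and only if $\Sigma$ splits $g^{-1}(e)$, because the splitting condition is $H$-equivariant. Since $H$ is transitive on the $q$ edges, the stabilizer of any edge has index $q$ in $H$, so as $g$ ranges over $H$ the edge $g^{-1}(e)$ hits each edge exactly $|H|/q$ times. Hence
\[ \frac{|\{g \in H \mid g(\Sigma)\text{ does not split }e\}|}{|H|} = \frac{q - |B(\Sigma)|}{q} \leq 1 - c. \]

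Third, I would pick $g_1, g_2, \ldots$ greedily. Let $U_0$ be the set of all $q$ edges, and let $U_i$ be the set of edges in $U_{i-1}$ not split by $g_i(\Sigma)$. For any fixed set $U_{i-1}$, the average of $|U_i|$ over $g_i$ chosen uniformly from $H$ is at most $(1-c)|U_{i-1}|$ by the bound just established applied edge-by-edge, so some $g_i$ achieves $|U_i| \leq (1-c)|U_{i-1}|$. Iterating gives $|U_k| \leq q(1-c)^k$, which is strictly less than $1$ as soon as $k > -\log q/\log(1-c)$. Taking $k = \lceil -\log q/\log(1-c) \rceil$ (and, if this value happens to be an integer, replacing $\lceil\cdot\rceil$ by the smallest strictly larger integer, which is absorbed by the ceiling convention) gives $|U_k| = 0$, so $\Delta = g_1(\Sigma) \cup \dots \cup g_k(\Sigma)$ splits every edge by the monotonicity observation, and $|\Delta| \leq k|\Sigma|$ as required.

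The combinatorial content is very light; the only real care needed is the boundary bookkeeping around the ceiling, which is the only place the argument could look suboptimal. Otherwise the two ingredients (monotonicity of $B$, and the averaging bound coming from edge-transitivity) combine essentially automatically into the desired logarithmic iteration.
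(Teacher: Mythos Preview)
Your proof is correct and follows the standard greedy amplification argument that underlies Babai's original Splitting Lemma. Note, however, that the paper does not supply its own proof of this proposition: it is quoted verbatim as a black box from \cite{babai2Tr}, so there is nothing in the paper to compare against. Your write-up reproduces essentially the original argument; the only minor wrinkle is the boundary case when $-\log q/\log(1-c)$ is an integer, which you flag and which is in any case harmless since $|U_i|$ is an integer and $|U_{i-1}|=1$ forces $|U_i|\le\lfloor 1-c\rfloor=0$.
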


Given a permutation group \( H \) on \( V \), we say that \( \Sigma \subseteq V \) is a \emph{halving set} if \( H_\Sigma \) has no orbits of size greater than \( |V|/2 \).
Recall that the \emph{thickness} \( \theta(H) \) of a group \( H \) is the largest degree of an alternating section of \( H \).

\begin{proposition}[{\cite[Lemma~3]{pyber2tr}}]\label{halvinglem}
	Let \( H \) be a permutation group on \( V \), and suppose that \( \theta(H) < k \) for some \( k \geq 12 \).
	Then there is a halving set \( \Sigma \subseteq V \) for \( H \) with \( |\Sigma| \leq 4k \).
\end{proposition}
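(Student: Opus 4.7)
The plan is to combine a reduction to the transitive case with a greedy construction, using the thickness hypothesis as an obstruction to long greedy runs.

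First I would reduce to the case where $H$ is transitive on $V$. If every $H$-orbit on $V$ has size at most $|V|/2$, then $\Sigma = \emptyset$ is already a halving set. Otherwise a single orbit $O$ of $H$ has size exceeding $|V|/2$; the remaining orbits together contain fewer than $|V|/2$ points and stay so under any pointwise stabilizer. Hence it suffices to construct $\Sigma \subseteq O$ with $|\Sigma| \leq 4k$ whose pointwise stabilizer in $H$ has no orbit in $O$ of size exceeding $|V|/2$. Replacing $H$ by its induced transitive action on $O$, I may assume $H$ is transitive of degree $n := |V|$, with $n > 4k$ (the case $n \leq 4k$ being handled by $\Sigma = V$).

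Next I would build $\Sigma$ greedily: set $\Sigma_0 = \emptyset$ and, while $H_{\Sigma_i}$ has an orbit $U_i \subseteq V$ of size greater than $n/2$, pick $v_{i+1} \in U_i$ and put $\Sigma_{i+1} = \Sigma_i \cup \{v_{i+1}\}$. The aim is to show the process halts within $4k$ steps. The key structural point is that at every surviving stage $i$ the group $H_{\Sigma_i}$ acts transitively on $U_i$ with $|U_i| > n/2 \geq 2k$, and its image in $\Sym(U_i)$ has thickness at most $\theta(H) < k$; in particular this image cannot contain the alternating group on $U_i$, since $\mathrm{Alt}(U_i)$ would have thickness $|U_i| \geq k$. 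Moreover the greedy pick forces $[H_{\Sigma_i} : H_{\Sigma_{i+1}}] > n/2$, giving a substantial group-theoretic drop at every step.

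The heart of the proof, and the main obstacle, is to convert the qualitative ``no $\mathrm{Alt}(U_i)$'' restriction into a quantitative termination bound: one should show that if $4k$ greedy steps all fail to halve, then the descending chain $H = H_{\Sigma_0} \geq H_{\Sigma_1} \geq \cdots \geq H_{\Sigma_{4k}}$ coupled with the restricted transitive actions on the $U_i$ must exhibit an alternating section of $H$ of degree at least $k$, contradicting $\theta(H) < k$. I expect this extraction to go through classical, CFSG-free tools --- notably Bochert's theorem on doubly transitive groups and Wielandt's bound $\theta(H) \leq (3/\alpha) \log n$ when $\mindeg(H) \geq \alpha n$ --- along the lines of the argument used by Pyber in~\cite{pyber2tr}. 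The exact constants (the factor $4$ and the threshold $k \geq 12$) would then emerge from careful bookkeeping, with $k \geq 12$ needed to absorb the exceptional behaviour of small alternating groups.
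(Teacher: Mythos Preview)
The paper does not prove this proposition; it is quoted verbatim from \cite{pyber2tr} with no argument supplied. There is therefore no in-paper proof to compare against.

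As for your proposal, it is an outline rather than a proof, and the decisive step is missing. The reduction to a single transitive orbit and the greedy choice of points are fine, and the observation $[H_{\Sigma_i}:H_{\Sigma_{i+1}}]=|U_i|>n/2$ is correct. But the ``heart of the proof'' --- producing an alternating section of degree $\geq k$ from a chain of $4k$ such drops --- is not carried out; you explicitly defer it to ``the argument used by Pyber in~\cite{pyber2tr}'', which is circular, since that argument \emph{is} the content of the lemma. The index information alone only gives $|H|>(n/2)^{4k}$, which is vastly weaker than $n!$ and yields no contradiction. The two tools you name do not fill the gap in any evident way: Bochert's theorem concerns $2$-transitive groups, whereas the action of $H_{\Sigma_i}$ on $U_i$ is merely transitive and need not even be primitive; and Wielandt's inequality runs in the wrong direction (it bounds $\theta$ from above given a lower bound on minimal degree, not conversely). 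What is actually needed is an upper bound on $|H|$, or on the length of such a stabilizer chain, in terms of $n$ and $\theta(H)$ --- this is the substantive content of \cite[Lemma~3]{pyber2tr} and is precisely what you have not supplied.
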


We will require the following corollary of Wielandt's bound for thickness~\cite{wielandtD}.
\begin{proposition}[{\cite[Corollary~6.2]{kivvaJH}}]\label{thetaIneq}
	Let \( H \) be a permutation group of degree \( n \) with minimal degree at least \( \alpha n \).
	Then \( \theta(H) \leq \frac{3}{\alpha} \log n \).
\end{proposition}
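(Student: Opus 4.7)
The plan is to derive Proposition~\ref{thetaIneq} directly from Wielandt's classical bound \cite{wielandtD}, which in the form we need here asserts that every permutation group \( H \) of degree \( n \) satisfies
\[ \theta(H) \leq \frac{3 n \log n}{\mindeg(H)}. \]
Once this is granted, the proposition follows by a one-line substitution: the hypothesis \( \mindeg(H) \geq \alpha n \) immediately gives \( \theta(H) \leq (3/\alpha) \log n \), which is the claim.

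To see where Wielandt's bound comes from, I would fix \( k = \theta(H) \) and choose subgroups \( N \trianglelefteq K \leq H \) with \( K/N \cong A_k \). The heart of the estimate is a support-counting argument. A \( 3 \)-cycle in \( A_k \) is supported on only three of the \( k \) letters on which \( A_k \) naturally acts; lifting such a \( 3 \)-cycle to an element \( \tilde\sigma \in K \) and exploiting that the orbits of \( N \) on \( V \) are permuted by the quotient \( K/N \), one shows that \( \tilde\sigma \) can be chosen so that its support in \( V \) is at most a quantity of order \( n \log n / k \), the logarithmic loss accounting for the worst-case \( N \)-orbit sizes. Since \( \tilde\sigma \) is a nontrivial element of \( H \), applying the definition of \( \mindeg \) yields \( \mindeg(H) \leq O(n \log n / k) \), which is precisely Wielandt's inequality after the constant is tracked.

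The only real obstacle is verifying the multiplicative constant \( 3 \); this is the combinatorial content of \cite{wielandtD}, and chasing it requires a careful analysis of how orbit sizes of \( N \) interact with lifts of small-support elements of \( A_k \). Once the bound is in hand, however, the corollary is a purely algebraic substitution, which is why Proposition~\ref{thetaIneq} is recorded as a one-line consequence in \cite[Corollary~6.2]{kivvaJH} without a detailed derivation.
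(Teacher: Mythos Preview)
The paper does not prove this proposition at all; it is merely quoted from \cite[Corollary~6.2]{kivvaJH} as a known consequence of Wielandt's work, with no argument given. Your first paragraph---take Wielandt's inequality as a black box and substitute \( \mindeg(H) \geq \alpha n \)---is exactly how the result is obtained in the cited source, so at that level your approach matches.

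Your second and third paragraphs go further than either the paper or \cite{kivvaJH} by attempting to sketch why Wielandt's inequality holds. This is a reasonable narrative, but as written it is not a proof: the assertion that a lift \( \tilde\sigma \) of a \( 3 \)-cycle can be chosen with support of order \( n \log n / k \), with the ``logarithmic loss accounting for the worst-case \( N \)-orbit sizes,'' is a plausibility claim rather than an argument. The actual derivation requires analysing how \( K/N \cong A_k \) permutes the \( N \)-orbits inside each \( K \)-orbit, reducing to a faithful action of a group with \( A_k \) as a section on a set of size at most \( n \), and then invoking Wielandt's original minimal-degree bound for \( k \)-transitive (or \( k \)-homogeneous) groups; the logarithm enters through the transitivity-degree estimate, not through orbit-size disparities. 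If you want to include a self-contained justification, that chain of reductions needs to be made explicit; otherwise, simply citing Wielandt as you do in the first paragraph is both correct and all that the paper itself does.
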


The following lemma is key to our proof. Recall that \( \{ x, y \} \) is an edge of a distance~2 graph \( G_2 \)
if and only if \( d(x, y) = 2 \).

\begin{lemma}\label{pybTrick}
	Let \( G \) be a connected graph on \( V \) and let \( H \) be its group of automorphisms.
	If a nonempty \( \Sigma \subseteq V \) splits all edges of \( G \) and \( G_2 \), then \( H_\Sigma \) is trivial.
\end{lemma}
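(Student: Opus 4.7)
The plan is to argue contrapositively by examining the fixed-point set of an arbitrary element of $H_\Sigma$. Concretely, take any $g \in H_\Sigma$ and set $F = \{v \in V : v^g = v\}$. Since $g$ fixes $\Sigma$ pointwise, $\Sigma \subseteq F$, so $F$ is nonempty. I aim to show $F = V$, which gives $g = 1$ and hence $H_\Sigma = 1$.

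Suppose for contradiction that $F \neq V$. Using connectedness of $G$, I pick a $G$-edge $\{x, y\}$ with $x \in F$ and $y \notin F$, so $y^g \neq y$. Since $g$ is a graph automorphism and $g(x) = x$, the vertex $y^g$ is still a neighbor of $x$ in $G$. Thus $y$ and $y^g$ are two distinct common neighbors of $x$, which forces $d_G(y, y^g) \in \{1, 2\}$. In other words, the pair $\{y, y^g\}$ is an edge either of $G$ or of $G_2$.

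Now I invoke the splitting hypothesis: since $\Sigma$ splits every edge of $G$ and every edge of $G_2$ with respect to $H$, the vertices $y$ and $y^g$ must lie in different $H_\Sigma$-orbits. But $g \in H_\Sigma$ sends $y$ to $y^g$, placing them in the same $H_\Sigma$-orbit. This is the desired contradiction, so $F = V$ and $g = 1$.

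There is essentially no technical obstacle: the whole argument hinges on the clean observation that a nontrivial $g \in H_\Sigma$ fixing some boundary vertex $x$ must displace an adjacent vertex $y$ to within distance $2$ of itself, and the combined splitting on $G$ and $G_2$ is exactly the condition that rules out such a displacement. The only thing to be careful about is noting that $y \neq y^g$ (which is given) and that $x$ being a common neighbor automatically bounds $d_G(y, y^g)$ by $2$; both are immediate.
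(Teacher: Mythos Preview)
Your proof is correct and follows essentially the same route as the paper's. The only cosmetic difference is that the paper works with the fixed-point set \( \Delta \) of the whole group \( H_\Sigma \), whereas you fix an arbitrary \( g \in H_\Sigma \) and work with \( F = \mathrm{Fix}(g) \); the boundary-edge argument and the use of the splitting hypothesis on \( G \) and \( G_2 \) are identical.
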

\begin{proof}
	Let \( \Delta \subseteq V \) be the set of points stabilized by \( H_\Sigma \); it suffices show that \( \Delta = V \).

	Clearly \( \Sigma \subseteq \Delta \), hence \( \Delta \) is nonempty. Suppose that \( \Delta \) is a proper subset of \( V \),
	then by connectivity of \( G \) there exists some vertex \( x \in V \setminus \Delta \) connected by an edge with some \( y \in \Delta \).
	By the definition of \( \Delta \), the vertex \( x \) is not stabilized by \( H \), thus for some \( h \in H \) the vertex \( x' = x^h \) lies in \( V \setminus \Delta \)
	and is adjacent to \( y \). There is a path \( x, y, x' \) in \( G \), so \( d(x, x') \leq 2 \). By our assumptions, \( x \) and \( x' \) must lie
	in different \( H \)-orbits, which is a contradiction with the definition of \( x' \). Therefore \( \Delta = V \) and we are done.
\end{proof}

\noindent\emph{Proof of Theorem~\ref{maingroup}.}
Let \( G \) be a distance-regular graph on \( V \), where \( |V| = n \), of diameter greater than two.
Let \( H \) be its full automorphism group. By Theorem~\ref{main} and Proposition~\ref{diam}, the minimal degree of \( H \)
is at least \( Cn/(\log n)^6 \) for some universal constant \( C > 0 \). By Wielandt's bound,
\( \theta(H) \leq \frac{3}{C}(\log n)^7 \) and by Proposition~\ref{halvinglem}, there exists a halving set \( \Sigma \subseteq V \)
with \( |\Sigma| \leq C' (\log n)^7 \) for some universal constant \( C' > 0 \).

Let \( S \subseteq V \) be some orbit of \( H_\Sigma \) on \( V \). Clearly \( |S| \leq n/2 \)
and by Proposition~\ref{expand}, we have \( |\delta_G(S)|/|S| \geq k/(2d) \), where \( k \) is the valency of \( G \)
and \( d \) is the diameter. This implies that at least \( k|S| / (2d) \) edges of \( G \) with an endpoint in \( S \) are split by~\( \Sigma \).
Summing by all orbits of \( H \) and recalling that we may count the same edge twice, we get that at least \( kn/(4d) \) edges of \( G \)
are split by \( \Sigma \). If \( q = kn/2 \) is the number of edges of \( G \), then \( \Sigma \) splits at least \( q/(2d) \) edges of \( G \).

Since \( G \) is primitive, the distance~2 graph \( G_2 \) is also connected and its diameter is bounded by \( d \),
hence the same reasoning applies and \( \Sigma \) splits at least \( q_2/(2d) \) edges of \( G_2 \), where \( q_2 \) is the number of edges in \( G_2 \).
By Proposition~\ref{splitlem}, there are sets \( \Delta \) and \( \Delta_2 \) of size at most \( 4|\Sigma|d \log n \) which split every edge
of the graphs \( G \) and \( G_2 \), respectively. By taking \( \Sigma' = \Delta \cup \Delta_2 \) and recalling that \( d \leq 5 \log n \) by Proposition~\ref{diam},
we obtain that \( |\Sigma'| \leq C''(\log n)^9 \) for some universal constant \( C'' > 0 \), and \( \Sigma' \) splits all edges of \( G \) and \( G_2 \).
Lemma~\ref{pybTrick} shows that \( \Sigma' \) is a base for \( H \), which finishes the proof. \qed

\section{Acknowledgement}

The authors would like to thank B.~Kivva for the remarks which improved the exposition of this paper.


\end{document}